\documentclass[11pt,final]{amsart}
\usepackage{amsmath,amssymb,amsthm,amsfonts,mathrsfs,amsopn}
\usepackage[all]{xy}
\usepackage{dsfont}
\usepackage{hyperref}
\usepackage{color}
\usepackage{esint}
\usepackage{mathtools}
\mathtoolsset{showonlyrefs}
\usepackage{slashed}
\usepackage{showkeys}
\usepackage{tikz-cd}
\usepackage[margin=0.9in]{geometry}

\usepackage[obeyDraft]{todonotes}

\newcommand{\Abracket}[1]{\left<#1\right>} 

\newcommand{\R}{\mathbb{R}}
\newcommand{\C}{\mathbb{C}}
\newcommand{\bZ}{\mathbb{Z}}

\newcommand{\dd}{\mathop{}\!\mathrm{d}}

\newcommand{\p}{\partial}

\newcommand{\Id}{\mathrm{Id}}
\newcommand{\D}{\slashed{D}}

\newcommand{\snabla}{\slashed{\nabla}} 

\DeclareMathOperator{\Dom}{Dom}
\DeclareMathOperator{\dv}{\dd{vol}}

\DeclareMathOperator{\End}{End}

\DeclareMathOperator{\id}{Id}
\DeclareMathOperator{\Ker}{Ker}

\DeclareMathOperator{\Ran}{Ran} 
\DeclareMathOperator{\rank}{rank}

\DeclareMathOperator{\Span}{Span}

\newcommand{\arsinh}{\operatorname{arsinh}}
\newcommand{\Pthd}[3]{({#2*0.92+#1*0.38},{#3*0.70-#1*0.24})}
\newcommand{\Ptwo}[2]{({#1*1.10},{#2*0.70})}
\newtheorem{thm}{Theorem}[section]
\newtheorem{dfn}[thm]{Definition}
\newtheorem{lemma}[thm]{Lemma}

\newtheorem{cor}[thm]{Corollary}

\newtheorem{rmk}[thm]{Remark}

\usepackage[
    backend=biber,
    style=numeric,
    sorting=nyt
]{biblatex}

\title{Continuity of Weighted Dirac Spectra}
\thanks{2020 \textit{Mathematics Subject classification:}  35Q41, 47A75, 49R05, 58J50}

\author[Z. Qiu]{Zixuan Qiu}
\address{Zixuan Qiu, School of mathematics and statistics, Beijing Institute of Technology, Zhongguancun South Street No. 5, 100081 Beijing, P.R.China.}
\email{ZixuanQiu@bit.edu.cn}

\author[R. Wu]{Ruijun Wu}
\address{Ruijun Wu, School of mathematics and statistics, Beijing Institute of Technology, Zhongguancun South Street No. 5, 100081 Beijing, P.R.China.}
\email{ruijun.wu@bit.edu.cn}

\thanks{ R.W. is partially supported by the Beijing Natural Science Foundation No. 1262018.
}

\begin{document}
\begin{abstract}
For the weighted Dirac eigenvalue problem, we show that the two-sided weighted spectrum depends continuously on the weight under continuous deformations within a uniformly elliptic class. Moreover, for differentiable families of weights we obtain a quantitative Lipschitz estimate for the full spectrum in the $\arsinh$--metric, based on a weighted Hellmann--Feynman variational identity.
\end{abstract}

\maketitle

{\bf Keywords:} weighted Dirac spectra, spectral continuity, spectral differentiability

\section{Introduction}\label{sec:intro}

We consider a weighted eigenvalue problem for Dirac operators of the form
\begin{align}\label{eq:WD}
    \D_g\psi=\lambda\,A\psi \qquad \text{on } M^n,
\end{align}
where $(M^n,g)$ is an $n$-dimensional closed Riemannian spin manifold with a fixed spin structure $\Theta$, and~$\D_g$ is the Dirac operator on the associated spinor bundle $\Sigma_g M$.
A section~$\psi\in\Gamma(\Sigma_g M)$ is referred to as a spinor field.
The weight $A\in\End(\Sigma_g M)$ is assumed to be symmetric and positive definite fiberwisely.
If~\eqref{eq:WD} is satisfied by~$(\lambda,\psi)\in\R\times\Gamma(\Sigma_g M)$ with~$\psi\neq 0$, we say that~$\lambda$ is a weighted Dirac eigenvalue for the weight~$A$, and call~$\psi$ a weighted eigenspinor.
Similar to the spectral theory for~$\D_g$, it is readily seen that \eqref{eq:WD} has a two-sided unbounded discrete real spectrum, each eigenvalue has finite multiplicity, and that the eigenspinors form a complete orthonormal basis of the $L^2$-space of spinors.
Moreover, these weighted eigenvalues admit min-max characterizations and depend continuously on the weights, see~\cite{Qiu2025weighted}.

%
Weighted Dirac equations of the form \eqref{eq:WD} arise naturally in geometric and variational problems.
Within the local spinorial Weierstrass representation of surfaces in $\mathbb{R}^3$ \cite{Friedrich1998Representation,Taimanov1997modified,Taimanov1998Weierstrass}, the equation $\D_g\psi=H\psi$ arises, where $H$ denotes the mean curvature.
The main motivation for the present work comes from super Liouville-type systems \cite{Jevnikar2020existence,Jevnikar2021SuperLiouville,Jevnikar2021sinhGordon,Jost2007super},
where the spinor component solves $\D_g\psi=\lambda e^u\psi$ with~$\lambda>0$ and~$u\in H^1(M)$.
One needs to study the behavior of~$\lambda$ when~$u$ varies in~$H^1(M)$.
We also mention that, in the nonlinear spinorial Yamabe problem \cite{Bartsch2022Yamabe,Isobe2024Yamabe,Isobe2023Yamabe},
linearization leads to spectral equations of the form $\D_g\psi=\lambda|\varphi|^{\frac{2}{n-1}}\psi$, where
$\varphi\neq 0$ is a solution of the Yamabe-type spinorial equation. We restrict here to the linear eigenvalue problem \eqref{eq:WD}, allowing in particular scalar weights $A= f\Id$ with $f\in C^1(M,\mathbb{R}_+)$.
In this paper, we focus on uniformly positive definite weights, which excludes sign-changing or degenerate weights that may occur in some geometric applications.

When $A=\Id$, \eqref{eq:WD} reduces to the classical Dirac spectral problem, which has been a driving force in spin geometry; see e.g.\ \cite{Ginoux2009Dirac} and references therein. Under perturbations of the metric, a number of fine spectral properties of $\D_g$ have been studied, for instance by B\"ar \cite{Baer1996Metrics} and Dahl \cite{Dahl2003Dirac,Dahl2005Prescribing}. A major analytical difficulty in \emph{full-spectrum} continuity problems is that the Dirac spectrum is real and discrete but unbounded in both directions of $\mathbb{R}$; to compare \emph{all} eigenvalues simultaneously one needs a global encoding together with a suitable metric on the space of spectra. This was achieved by Nowaczyk~\cite{Nowaczyk2013continuity}, who represents the spectrum by a monotone map $\mathbb{Z}\to\mathbb{R}$ and equips the space of such maps with an $\arsinh$-metric, obtaining global continuity for the Dirac spectrum under metric perturbations; see also \cite{Lott2002Collapsing}. In a different direction, Roos studied Dirac operators with symmetric $W^{1,\infty}$ potentials along collapsing sequences of spin manifolds and identified the corresponding limit operators~\cite{Roos2018W1Inf,Roos2020Collapse}.

These developments inspire our use of a full-spectrum metric and highlight the robustness issues that arise under low-regularity perturbations. In the present work, we therefore consider a closely related but complementary perturbation problem: we keep the underlying Dirac operator $\D_g$ fixed and study continuity properties of the \emph{weighted} spectrum as the fiberwise positive definite endomorphism~$A$ varies. Our emphasis is on \emph{full-spectrum} continuity, i.e. simultaneous control of the entire two-sided spectrum counted with multiplicity, and on low regularity of the weights.


We fix $p>n$ and denote
\begin{align}
\mathcal A:=W^{1,p}\bigl(M,\End(\Sigma_gM)\bigr),
& &
\mbox{ and } \quad
\|F\|_{\mathcal A}:=\|F\|_{L^p}+\|\nabla^gF\|_{L^p},\quad \forall F\in\mathcal A.
\end{align}
Moreover, for $0<\Lambda_1\le \Lambda_2$, let
\begin{align}\label{eq:def_P_C1C0_intro}
\mathcal P_{\Lambda_1,\Lambda_2}
:=\Bigl\{A\in\mathcal A:\ A(x)=A(x)^*,\ \Lambda_1\Id\le A(x)\le \Lambda_2\Id\Bigr\}.
\end{align}
The weights in $\mathcal P_{\Lambda_1,\Lambda_2}$ are said to be admissible. The space of smooth sections of $\Sigma_gM\rightarrow M$ is denoted by $\Gamma(\Sigma_gM)$ while those sections of regularity $W^{k,p}$ are denoted by $W^{k,p}(M, \Sigma_gM)$. For example,~$L^2(M, \Sigma_gM)$ is a Hilbert space equipped with the standard $L^2$ inner product and $H^1(M, \Sigma_gM)$ consists of the sections of Sobolev regularity $H^1 = W^{1,2}$.

Each $ A \in \mathcal{P}_{\Lambda_1,\Lambda_2} $ induces a weighted inner product given by
\begin{align}\label{eq:wei_inn_pro}
(\psi, \phi)_A := \int_M \langle A\psi, \phi \rangle \, dv_g, \quad \forall \psi, \phi \in L^2(M, \Sigma_gM),
\end{align}
and we write $\mathcal{H}_A := \bigl(L^2(M, \Sigma_gM), (\cdot, \cdot)_A\bigr)$ and view
\begin{align}
B(A) := A^{-1}\D
\end{align}
as an operator on $\mathcal{H}_A$ with domain $H^1(M, \Sigma_gM)$. Additionally, we consider the conjugated operator
\begin{align}
\widetilde{D}(A) := A^{-1/2}\D A^{-1/2},
\quad
\text{with }\Dom\bigl(\widetilde{D}(A)\bigr) = H^1(M, \Sigma_gM).
\end{align}

We use the space $\mathfrak{Mon}$ and the uniform $\arsinh$--metric as in \cite{Nowaczyk2013continuity}.
\begin{dfn}\label{dfn:Mon}
Let $\mathfrak{Mon}$ be the set of all functions $u:\bZ\to\R$ such that:
\begin{enumerate}
\item[(1)] $u$ is nondecreasing, so $u(j)\le u(j+1)$ for all $j\in\bZ$,

\item[(2)] $u$ is proper, so $u(j)\to -\infty$ as $j\to -\infty$ and $u(j)\to +\infty$ as $j\to +\infty$.
\end{enumerate}
The $\arsinh$-metric on $\mathfrak{Mon}$ is defined by
\begin{equation}\label{eq:da}
 d_a(u,v):=\sup_{j\in\bZ}\Big|\arsinh\bigl(u(j)\bigr)-\arsinh\bigl(v(j)\bigr)\Big|,
 \qquad \forall u,v\in \mathfrak{Mon}.
\end{equation}
\end{dfn}
By Weyl asymptotics for weight eigenvalues \cite{Qiu2025weighted}, the ordered eigenvalues of Dirac-type operators belong to $\mathfrak{Mon}$. For an admissible weight $A$ we enumerate the spectrum of $\widetilde D(A)$ by a nondecreasing map.
\begin{dfn}\label{def:ordered_spectrum_counting}
For $A \in\mathcal P_{\Lambda_1,\Lambda_2}$, let $\mathfrak s^{A}:\bZ\to\R$ be the unique nondecreasing function such that
\begin{enumerate}
    \item [(1)] $\mathfrak{s}^{A}(\bZ) = \operatorname{Spect}\bigl(A^{-1/2}\D\,A^{-1/2}\bigr)$;
    \item [(2)] for every $\lambda\in\R$,
\begin{align}
\dim\Ker\bigl(A^{-1/2}\D\,A^{-1/2}-\lambda\bigr)
=\sharp\bigl(\mathfrak s^{A}\bigr)^{-1}(\lambda),
\end{align}
where $\sharp$ denotes the cardinality of a set, so $\sharp\bigl(\mathfrak s^{A}\bigr)^{-1}(\lambda)$ is the multiplicity of $\lambda$;
    \item [(3)] $\mathfrak s^{A}(0)\geq 0$ and $\mathfrak s^{A}(-1)< 0$.
\end{enumerate}
\end{dfn}
At first glance, this may be a good spectral map, which was expected to behave nicely with respect to continuous deformations of the Dirac operator.
However, this is in general not the case even for a smooth deformation of the Riemannian metrics, as already observed in~\cite{Nowaczyk2013continuity}.
The continuity properties of the spectral maps for spin Dirac operator with respect to~$C^1$ deformations of Riemannian metrics were subtle and was achieved by passing to a quotient space of~$\mathfrak{Mon}$ in~\cite{Nowaczyk2013continuity}, and the resulting spectral map is rather implicit.
Fortunately here we can show that the potential obstruction for continuity does not appear in our problem and the map~$\mathfrak{s}^A$ turns out to be continuous, sometimes even differentiable, under suitable but natural assumptions on the weights.

To state the main results we use the following hypotheses:
\begin{equation}\label{hyp:H0}\tag{H0}
\text{The family } A_t \text{ lies in } \mathcal P_{\Lambda_1,\Lambda_2} \text{ and is continuous in } t.
\end{equation}
\begin{align}\label{hyp:H1}\tag{H1}
& I\subset\R\ \text{is an interval and }I\ni t\mapsto A_t\in \mathcal P_{\Lambda_1,\Lambda_2}\ \text{is of class }C^1.
\end{align}
Our first result addresses the continuity of the full weighted spectrum under \emph{uniform} perturbations of~$A$.
Although we work on $W^{1, p}(M, \Sigma_gM)$ in the analysis, the continuity statement can be stated with respect to the $C^0$-topology on the uniformly elliptic class \eqref{eq:def_P_C1C0_intro}.

\begin{thm}\label{thm: C0_CONT}
Assuming \eqref{hyp:H0}, the spectral map
\begin{align}
    \mathfrak s:\bigl(\mathcal P_{\Lambda_1,\Lambda_2},\|\cdot\|_{\mathcal A}\bigr)\longrightarrow (\mathfrak{Mon},d_a),
    \qquad
    A\longmapsto \mathfrak{s}^A,
\end{align}
is continuous.
\end{thm}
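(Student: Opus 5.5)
The key observation is that $\widetilde D(A)=A^{-1/2}\D A^{-1/2}$ is a congruence transform of $\D$. Its eigenvalues are therefore controlled by a min-max principle in which $A$ enters only through the quadratic form $(A\cdot,\cdot)_{L^2}$. I plan to prove a \emph{multiplicative} two-sided comparison of eigenvalues, uniform in the index $j\in\bZ$. The $\arsinh$-metric then converts this into a uniform bound. Concretely, for $A,A'\in\mathcal P_{\Lambda_1,\Lambda_2}$, set
\[
\varepsilon:=\|A'-A\|_{C^0}/\Lambda_1 .
\]
By Morrey's embedding $W^{1,p}\hookrightarrow C^0$ for $p>n$, we have $\varepsilon\le C\|A'-A\|_{\mathcal A}$. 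Writing $A'=A^{1/2}(\Id+E)A^{1/2}$ with $\|E\|_{C^0}\le\varepsilon$ gives
\[
(1-\varepsilon)A\le A'\le(1+\varepsilon)A
\]
fiberwise. The target statement is then
\[
\mathfrak s^{A'}(j)\in\bigl[(1+\varepsilon)^{-1}\mathfrak s^{A}(j),\,(1-\varepsilon)^{-1}\mathfrak s^{A}(j)\bigr]\quad\text{for }\mathfrak s^A(j)\ge0,
\]
and symmetrically (with the interval endpoints swapped) for negative values, for every $j\in\bZ$.

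To obtain this, I would use the min-max characterization of weighted eigenvalues from \cite{Qiu2025weighted}. Positive eigenvalues of $\widetilde D(A)$ coincide with those of $B(A)$ with the same multiplicities, since $\widetilde D(A)\phi=\lambda\phi$ if and only if $\D\psi=\lambda A\psi$ with $\psi=A^{-1/2}\phi$. Moreover $\Ker\widetilde D(A)=A^{1/2}\Ker\D$, so the number of nonpositive and of negative eigenvalues is independent of $A$ near the threshold. The normalization (3) in Definition~\ref{def:ordered_spectrum_counting} therefore fixes the index $0$ at the same place for both weights, namely the first nonnegative eigenvalue. A kernel block of constant dimension sits there.

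For the positive part I would use a characterization that avoids the two-sided unboundedness. Let $\mu_k(A)$ denote the $k$-th positive eigenvalue of $\D\psi=\lambda A\psi$. Then $1/\mu_k(A)$ is the $k$-th largest eigenvalue of the compact self-adjoint operator $\D^{-1}A$ restricted appropriately. Equivalently,
\[
1/\mu_k(A)=\max_{\dim V=k}\ \min_{\psi\in V\setminus0}\frac{(A\psi,\psi)}{(\D^{-1}\psi,\psi)}
\]
over subspaces $V$ of the positive spectral subspace of $\D$, where $\D^{-1}$ is taken on $(\Ker\D)^\perp$. The point of this form is that the order relation $(1-\varepsilon)A\le A'\le(1+\varepsilon)A$ passes directly into the Rayleigh quotient. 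This is the step I expect to be the main obstacle. A naive Rayleigh quotient $(\D\psi,\psi)/(A\psi,\psi)$ over all of $H^1$ is indefinite, so the restriction to the positive spectral subspace of $\D$ has to be justified. It must be checked to give exactly the positive weighted eigenvalues, using the $(\cdot,\cdot)_A$-orthogonal splitting. If that fails, I would instead use the operator $A^{1/2}\D^{-1}A^{1/2}$ on $(A^{-1/2}\Ker\D)^{\perp}$. This operator is compact and self-adjoint, its positive and negative eigenvalues are the reciprocals of those of $\widetilde D(A)$, and I would apply a Sylvester-type inertia/min-max argument to it.

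Granting the comparison, the conclusion is elementary. For $x\ge0$ and $c\in[(1+\varepsilon)^{-1},(1-\varepsilon)^{-1}]$ one has
\[
|\arsinh(cx)-\arsinh(x)|\le|\log c|,
\]
because $x\,\arsinh'(x)\le1$. Hence
\[
d_a(\mathfrak s^{A},\mathfrak s^{A'})\le-\log(1-\varepsilon)\to0
\]
as $\|A'-A\|_{\mathcal A}\to0$, and the same bound holds for negative indices by symmetry. This gives continuity of $\mathfrak s$, indeed local Lipschitz continuity with respect to the $C^0$ norm. In particular it gives continuity along every family satisfying \eqref{hyp:H0}.
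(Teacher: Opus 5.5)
\textbf{The variational step fails as written.} Your outer steps are sound: the reduction to $(1-\varepsilon)A\le A'\le(1+\varepsilon)A$, the index bookkeeping via $\Ker\widetilde D(A)=A^{1/2}\Ker\D$, and the bound $|\arsinh(cx)-\arsinh(x)|\le|\log c|$ are all correct. But the variational characterization that carries the whole argument is wrong in both forms you give.

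First, the quotient $\frac{(A\psi,\psi)}{(\D^{-1}\psi,\psi)}$ is upside down. Already for $A=\Id$ and $k=1$ it equals $\lambda$ on every eigenspinor of $\D$ with eigenvalue $\lambda>0$, so the max is $+\infty$. More seriously, restricting $V$ to the positive spectral subspace $H^+$ of $\D$ gives a wrong value however the quotient is oriented, because weighted eigenspinors do not lie in $H^+$. Take $\D=\mathrm{diag}(1,-1)$ on $\R^2$ and $A=\bigl(\begin{smallmatrix}a&b\\ b&c\end{smallmatrix}\bigr)>0$ with $b\neq0$. Your formula returns $\mu_1=1/a$. The actual positive weighted eigenvalue is the positive root of $(ac-b^2)\lambda^2+(a-c)\lambda-1$. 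This polynomial equals $-b^2/a^2<0$ at $\lambda=1/a$, so the root is strictly larger than $1/a$.

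Second, your fallback is correct only when $\Ker\D=0$, where $A^{1/2}\D^{-1}A^{1/2}=\widetilde D(A)^{-1}$. Suppose $\Ker\D\neq0$, let $\D^{-1}$ be the inverse on $(\Ker\D)^\perp$ extended by $0$, and let $P$ be the orthogonal projection onto $(\Ker\D)^\perp$. Then the nonzero eigenvalues of $A^{1/2}\D^{-1}A^{1/2}$ are reciprocals of those of $\D\psi=\lambda\,PAP\psi$ on $(\Ker\D)^\perp$, not of $\D\psi=\lambda A\psi$. Restricting to $(A^{-1/2}\Ker\D)^\perp$ only discards part of its kernel and changes nothing. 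Note also that $\Ker\widetilde D(A)=A^{1/2}\Ker\D$, not $A^{-1/2}\Ker\D$. For $\D=\mathrm{diag}(0,1)$ and the same $A$, the nonzero weighted eigenvalue is $a/(ac-b^2)$, whereas $A^{1/2}\D^{-1}A^{1/2}$ has nonzero eigenvalue $c$. This mismatch is the generic situation once $\Ker\D\neq0$, for instance for nonconstant scalar weights.

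\textbf{The repair is to compress rather than restrict.} Let $Q_A$ be the $L^2$-orthogonal projection onto $(A^{1/2}\Ker\D)^\perp=A^{-1/2}(\Ker\D)^\perp$. The reduced inverse of $\widetilde D(A)$ is $Q_AA^{1/2}\D^{-1}A^{1/2}Q_A$. The substitution $\phi=A^{-1/2}w$ turns its Rayleigh quotient into $\frac{(\D^{-1}w,w)}{(A^{-1}w,w)}$ with $w\in(\Ker\D)^\perp$, so
\[
\frac{1}{\lambda_k^+(A)}=\max_{V\subset(\Ker\D)^\perp,\ \dim V=k}\ \min_{w\in V\setminus\{0\}}\frac{(\D^{-1}w,w)}{(A^{-1}w,w)},
\]
where $V$ ranges over \emph{all} $k$-dimensional subspaces of $(\Ker\D)^\perp$. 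Since $(1+\varepsilon)^{-1}A^{-1}\le A'^{-1}\le(1-\varepsilon)^{-1}A^{-1}$, and the numerator is positive on a maximizing $V$, this gives exactly your two-sided comparison. Replace $\D$ by $-\D$ for the negative side.

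\textbf{Comparison with the paper.} With this fix your proof is complete, and it differs genuinely from the paper's. The paper shows that $A\mapsto\widetilde D(A)$ is a discrete self-adjoint family of type (A), using Sylvester estimates on $A^{\pm1/2}$ in $W^{1,p}$ to get operator-norm continuity in $\mathcal B(H^1,L^2)$. It then applies Nowaczyk's theorem (Theorem~\ref{thm: key expanding}) to get closeness up to an index shift, and kills the shift by constancy of $\dim\Ker$. Your route uses the congruence structure to avoid Nowaczyk's theorem and all derivative control of $A^{\pm1/2}$. It yields the explicit bound $d_a(\mathfrak s^A,\mathfrak s^{A'})\le-\log\bigl(1-\|A'-A\|_{C^0}/\Lambda_1\bigr)$, that is, local Lipschitz continuity in the weaker $C^0$ norm. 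The paper's framework, in turn, is more robust for perturbations that are not congruences, such as changes of metric.
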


The idea of the proof is as follows. We first transfer the weighted problem to a self-adjoint Dirac-type operator on a fixed Hilbert space by a canonical isometric conjugation depending on $A$. Working in $W^{1,p}(M, \End(\Sigma_gM))$, we obtain quantitative control of
$A^{\pm 1/2}$ and their covariant derivatives via a Sylvester-type equation, which yields operator-norm continuity of the
conjugated operators. This places the family into Nowaczyk's framework~\cite{Nowaczyk2013continuity} of discrete self-adjoint families of type (A), so that full-spectrum continuity in the $\arsinh$-metric follows locally. Finally, the indexing is shown to be stable by using constancy of the kernel dimension.

\

Our second result is quantitative: for a $C^1$ family of weights~$(A_t)$ in $\mathcal P_{\Lambda_1,\Lambda_2}$, the full spectrum is locally Lipschitz in~$t$ with respect to the $\arsinh$-metric.
The main tool is the Hellmann–Feynman variational identity, which is recalled in Section~\ref{sec:dirac-spectrum-perturbation}. Here we need to adapt it to the weighted setting.

\begin{thm}\label{thm:C1_CONT}
Assuming \eqref{hyp:H1}, namely that the map $I\ni t\mapsto \|\dot{A_t}\|_{W^{1,p}(\End(\Sigma_gM))}$ is continuous, where $\dot{A_t} := \frac{\partial}{\partial t}A_t$.
Denote
\begin{align}\label{eq:LJ_def_C1}
L_I:=\frac{C}{\Lambda_1}\sup_{t\in I}\|\dot{A_t}\|_{\mathcal A},
\end{align}
where $C$ is the Sobolev constant for the embedding of $W^{1,p}\bigl(M,\End(\Sigma_gM)\bigr)$ into $L^{\infty}\bigl(M,\End(\Sigma_gM)\bigr)$.
Let $\mathfrak s_t\in\mathfrak{Mon}$ denote the spectral tuple of the weighted problem $\D\psi=\lambda A_t\psi$.
\begin{enumerate}
\item [(1)]  If $t_0\in I$ and $(\lambda(t_0),\varphi(t_0))$ is an eigenpair of $B_{t_0}:=A_{t_0}^{-1}\D$,
then there exist a neighborhood $U\subset I$ of $t_0$ and $C^1$ maps
$t\mapsto(\lambda(t),\varphi(t))$ with $\varphi(t)\in H^1(M,\Sigma_gM)$ such that
\begin{align}
\D\varphi(t)=\lambda(t)A_t\varphi(t),\qquad
\int_M\langle A_t\varphi(t),\varphi(t)\rangle\,dv_g=1,\qquad t\in U,
\end{align}
and
\begin{align}
\lambda'(t)=-\lambda(t)\int_M\langle \dot{A_t}\varphi(t),\varphi(t)\rangle\,dv_g,
\qquad t\in U.
\end{align}
In particular, $|\lambda'(t)|\le L_I|\lambda(t)|$ for all $t\in U$.
\smallskip
\item [(2)] For any $s,t\in I$,
\begin{align}\label{eq:MonLip_intro}
d_a(\mathfrak s_t,\mathfrak s_s)\le L_I\,|t-s|.
\end{align}
Consequently, $t\mapsto\mathfrak s_t$ is Lipschitz as a map from $I$ to $(\mathfrak{Mon},d_a)$.
\end{enumerate}

\end{thm}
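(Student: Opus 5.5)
We will prove part (1) by an implicit-function argument for the conjugated operator, and then integrate the resulting differential inequality for $\arsinh$ of each eigenvalue branch to obtain part (2).

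\emph{Part (1).} Pass to the self-adjoint family $\widetilde D_t:=A_t^{-1/2}\D A_t^{-1/2}$ on the fixed Hilbert space $L^2(M,\Sigma_gM)$, with fixed domain $H^1$. The weighted eigenpairs correspond via $\varphi=A_t^{-1/2}\phi$, and $(\varphi,\varphi)_{A_t}=\|\phi\|_{L^2}^2$. Since $p>n$, we have $W^{1,p}\hookrightarrow C^0$, and the Sylvester-equation control of $A^{\pm1/2}$ used for Theorem~\ref{thm: C0_CONT} shows that $t\mapsto A_t^{\pm1/2}$ is $C^1$ into $W^{1,p}$. Multiplication by a $W^{1,p}$ endomorphism is bounded on $H^1$ and on $L^2$. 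Hence $t\mapsto \widetilde D_t\in\mathcal L(H^1,L^2)$ is $C^1$, and the family is holomorphic-type (Kato type (A)) in the real-$C^1$ sense.

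If $\lambda(t_0)$ is simple, the implicit function theorem applied to
\[
F(t,\lambda,\phi)=\bigl(\widetilde D_t\phi-\lambda\phi,\ \|\phi\|^2-1\bigr)
\]
gives $C^1$ branches. In the multiple case I would use the Riesz projection $P_t$ onto the eigenvalues near $\lambda(t_0)$. This projection is $C^1$ in $t$. Kato's reduction theory for self-adjoint $C^1$ families then yields, after reordering, a choice of $C^1$ eigenbranches; failing that, one can work with Lipschitz branches, which suffices for (2). I expect this multiplicity issue to be the main technical obstacle, and I would handle it through the reduced finite-dimensional matrix $P_t\widetilde D_tP_t$ together with Rellich-type results.

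The derivative formula is a weighted Hellmann--Feynman computation. Differentiate $\D\varphi=\lambda A_t\varphi$ and pair it with $\varphi$. Using the self-adjointness of $\D$ and of $A_t$, this gives
\[
\lambda'(\varphi,\varphi)_{A_t}+\lambda\int\langle\dot A_t\varphi,\varphi\rangle=0 .
\]
Since $(\varphi,\varphi)_{A_t}=1$, the stated formula follows. For the bound, we have
\[
\Bigl|\int\langle\dot A_t\varphi,\varphi\rangle\Bigr|\le\|\dot A_t\|_{L^\infty}\|\varphi\|_{L^2}^2\le C\|\dot A_t\|_{\mathcal A}\,\Lambda_1^{-1},
\]
because $\Lambda_1\|\varphi\|_{L^2}^2\le(\varphi,\varphi)_{A_t}=1$. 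Therefore $|\lambda'|\le L_I|\lambda|$.

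\emph{Part (2).} Since $|\lambda'|\le L_I|\lambda|$ and $\arsinh'(x)=(1+x^2)^{-1/2}$, we get
\[
\bigl|\tfrac{d}{dt}\arsinh\lambda(t)\bigr|\le L_I\frac{|\lambda|}{\sqrt{1+\lambda^2}}\le L_I
\]
along each branch. The delicate point is to pass from branches to the ordered map $\mathfrak s_t(j)$. Eigenvalues never change sign along a branch, since $\lambda'=-\lambda\,c(t)$ forces $\lambda(t)=\lambda(t_0)e^{-\int c}$. Hence the zero eigenvalue persists and $\dim\Ker\D$ is constant; this is consistent with $\Ker\widetilde D_t=A_t^{1/2}\Ker\D$. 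Therefore the normalization (3) of Definition~\ref{def:ordered_spectrum_counting} is preserved, and no index shift occurs.

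Next, a standard rearrangement fact says that ordering a multiset of values is $1$-Lipschitz in the sup norm: if each of the countably many branch functions $\arsinh\lambda_k$ moves by at most $L_I|t-s|$, then so does each ordered value. I would prove this locally near each $t_0$, using finitely many branches within bounded windows together with a min-max characterization. Finally, I would cover $[s,t]$ by finitely many such neighborhoods and use the triangle inequality for $d_a$ to obtain \eqref{eq:MonLip_intro}.

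An alternative route avoids branches altogether. The monotonicity $e^{-L|t-s|}\le A_s^{-1}A_t$-type comparisons, inserted into min-max for the positive and the negative parts separately, give $|\log\mathfrak s_t(j)-\log\mathfrak s_s(j)|\le L_I|t-s|$ for nonzero eigenvalues. Since $|\arsinh x-\arsinh y|\le|\log x-\log y|$ for $x,y>0$, this also yields the result. I would keep this approach as the fallback if the branch bookkeeping proves messy.
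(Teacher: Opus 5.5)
Your branch-based route is the paper's route. It uses conjugation to $\widetilde D_t$, $C^1$ dependence of $A_t^{\pm1/2}$ via the Sylvester equation, $C^1$ Riesz projections, and the weighted Hellmann--Feynman identity giving $|\lambda'|\le L_I|\lambda|$. It then uses $\ker\widetilde D_t=A_t^{1/2}\ker\D$ to exclude index shifts, and sorting in finite windows. It stalls exactly where you say it might.

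\textbf{The gap in (1).} It cannot be closed as (1) is formulated: at a multiple eigenvalue there need not be any continuous eigenpair branch through a prescribed $(\lambda(t_0),\varphi(t_0))$.
\begin{itemize}
\item Already $\mathrm{diag}(t,-t)$ has no such branch through $(1,1)/\sqrt2$ at $t=0$.
\item In the weighted setting, take $A_t=\Id+(t-t_0)K$ at a multiple eigenvalue $\lambda_0\neq0$ of $\D$, with $K$ chosen so that its compression to the eigenspace is not scalar. The cluster splits with slopes $-\lambda_0\nu_i$, where the $\nu_i$ are the eigenvalues of that compression. Any continuous branch must then start at an eigenvector of the compression.
\end{itemize}
Kato's reduction theory, applied to the $C^1$ Hermitian matrix $M(t)_{ij}=\langle\widetilde D_t u_j(t),u_i(t)\rangle$ built from the frame of Theorem~\ref{thm:multiple_cluster_projector}, yields $C^1$ \emph{eigenvalue} functions \cite[Ch.~II, \S 6]{Kato1995perturbation}. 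It does not yield eigenvectors (Rellich's example). So (1) holds only for simple eigenvalues, via your implicit function argument, or under the hypothesis that a $C^1$ branch exists. That hypothesis is how Theorem~\ref{thm:WHF_prelim} is actually stated. The paper's proof of Theorem~\ref{thm:MonLip_prelim} simply asserts such branches, and the remark after the statement concedes they were assumed.

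\textbf{The missing step for (2).} You must obtain the branch bound without eigenvector branches. At each $t$, the derivative of a $C^1$ eigenvalue function of $M(t)$ is an eigenvalue of the compression of $\dot{\widetilde D}_t$ to $E=\ker(\widetilde D_t-\lambda)$. The terms from $\dot u_i$ cancel on $E$ because the frame stays orthonormal. For $\phi,\phi'\in E$ one computes $\langle\dot{\widetilde D}_t\phi,\phi'\rangle=-\lambda\langle A_t^{-1/2}\dot A_t A_t^{-1/2}\phi,\phi'\rangle$, whose eigenvalues are bounded by $L_I|\lambda|$. In addition, the windows and hence the neighbourhoods depend on $j$. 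So chain over $[s,t]$ for each fixed $j$ and take the supremum at the end, rather than applying the triangle inequality for $d_a$ over finitely many neighbourhoods.

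\textbf{Your fallback.} This is a genuinely different and complete proof of (2), and simpler than the paper's.
\begin{itemize}
\item Gronwall applied to $\tau\mapsto\langle A_\tau(x)v,v\rangle$ gives $e^{-L_I|t-s|}A_s\le A_t\le e^{L_I|t-s|}A_s$ pointwise.
\item The one thing you must specify is the min-max, since $\D$ is not semibounded. Substituting $\chi=A\psi$ gives a compact pencil on $(\ker\D)^\perp$. There $1/\lambda_k^+(A)=\max_{S}\min_{\chi\in S\setminus\{0\}}\langle\D^{-1}\chi,\chi\rangle/\langle A^{-1}\chi,\chi\rangle$, over $k$-dimensional $S\subset(\ker\D)^\perp$, with $\D^{-1}$ the inverse on $(\ker\D)^\perp$. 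Negative eigenvalues are handled the same way.
\item This gives $|\log\lambda_k^\pm(t)-\log\lambda_k^\pm(s)|\le L_I|t-s|$ for the moduli of the eigenvalues.
\item Since $\dim\ker\D$ is fixed, the indices match exactly, and $|\arsinh x-\arsinh y|\le|\log x-\log y|$ for $x,y>0$ finishes.
\end{itemize}
This route needs only $\sup_\tau\|\dot A_\tau\|_{L^\infty}$, and no perturbation theory, branches or sorting. The paper's route is what produces the Hellmann--Feynman formula in (1), to the extent that formula holds.
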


Note that we assumed the~$C^1$ regularity of the eigenpair~$(\lambda(t),\varphi(t))$ in the statement above, which does not follow automatically, because of the eigenvalues of higher multiplicity for Dirac operators and also the possible crossing phenomena for eigenvalues, see discussion in Section~\ref{sec:C1weight}. For the same reason, the Lipschitz regularity of the spectral map~$\mathfrak{s}_t$ in~$t$ is the best that one can expect.

We analyze the weighted eigenvalue problem and differentiate along a $C^1$ eigenpair under an $A_t$-dependent normalization. This process produces a weighted Hellmann--Feynman variational identity, which subsequently establishes a uniform Lipschitz bound for $\arsinh(\lambda(t))$ across any eigenvalue. To extend from a single eigenvalue to the ordered full spectrum, we localize within finite spectral windows and employ finite-rank spectral projections to reduce the problem to the realm of finite-dimensional perturbation theory, ensuring Lipschitz control over eigenvalues even at crossings. A sorting stability argument transfers the multiset bounds to monotone enumeration, and as the spectral window expands to $\pm\infty$, we obtain the global estimate.

\
Several remarks are warranted. Although the continuity statement is formulated for $C^0$ perturbations within the uniformly elliptic class~$\mathcal P_{\Lambda_1,\Lambda_2}$, the argument is carried out in $W^{1,p}$. This is the natural regime for our present method: we first conjugate the weighted problem to a self-adjoint Dirac-type operator $\widetilde D(A)$ on a fixed Hilbert space, and then apply Nowaczyk's spectrum continuity principle for discrete self-adjoint families of type (A). In this setup one needs access to first-order weak derivatives of $A$, which is provided precisely by the $W^{1,p}$ assumption with $p>n$. This relaxes the $W^{1,\infty}$-type hypotheses in related works such as Roos~\cite{Roos2018W1Inf}, but it remains restrictive: weights of merely $L^p$ regularity, as may occur in super Liouville-type systems, are not covered by the present approach. We expect that further extensions to coarser weights will require a more direct perturbation argument in the spirit of Kato's theory~\cite{Kato1995perturbation}, and we hope to address this in a future work.

\

The paper is organized as follows.
In Section~\ref{sec:dirac-spectrum-perturbation}, we develop the perturbation-theoretic framework used throughout the paper: spectral
encoding of two-sided unbounded discrete spectra, Nowaczyk's full-spectrum continuity principle for type-(A)
families, and the weighted Hellmann--Feynman identity. In Section~\ref{sec:C0weight} we prove
Theorem~\ref{thm: C0_CONT}, establishing full-spectrum continuity under $C^0$ perturbations of uniformly elliptic
weights. Section~\ref{sec:C1weight} is devoted to the proof of Theorem~\ref{thm:C1_CONT}, providing the $\arsinh$-Lipschitz
control for $C^1$ parameter families. Finally, in Section~\ref{sec:discussion}, we discuss how the results extend beyond the Dirac operator to general first-order formally self-adjoint elliptic operators.

\

\section{Preliminaries}\label{sec:dirac-spectrum-perturbation}
In this section, we recall two analytic tools that will be used throughout the paper: the full-spectrum continuity framework and the Hellmann--Feynman variational identity.

\subsection{Encoding of Dirac spectra}\label{sec:dirac-spectrum-continuity}

In this subsection, we metrize the Dirac spectral configurations and recall the local continuity theorem, which will form the foundation for the subsequent analysis.

Let $(M^n,g)$ be an $n$-dimensional closed Riemannian spin manifold with a fixed spin structure $\Theta$, and let $\Sigma_g M$ be the associated spinor bundle.
Following the notation introduced earlier, we denote by $\D$ the Dirac operator corresponding to the metric $g$.
We regard $\Sigma_g M$ as a real vector bundle of rank $2^{\left[\frac{n+1}{2}\right]}$ and, for the moment,
suppress its Hermitian structure. The bundle carries the natural Riemannian data: a fiberwise inner product $g^s$,
a spin connection $\snabla$, and a Clifford multiplication $\gamma$ satisfying the Clifford relations
\begin{align}
    \gamma(X)\gamma(Y)+\gamma(Y)\gamma(X)= -2g(X,Y)\id_{\Sigma_g M},
    \qquad \forall X,Y\in\Gamma(TM).
\end{align}
These structures are compatible, so that $(\Sigma_g M, g^s, \snabla, \gamma)$ is a Dirac bundle in the sense of
\cite[Definition~5.1]{LawsonMichelsohn1989spin}. Accordingly, the Dirac operator is defined as the composition
\begin{align}
    \Gamma(\Sigma_g M) \xrightarrow[]{\snabla}\Gamma(T^*M\otimes \Sigma_g M)
    \xrightarrow{\cong} \Gamma(TM\otimes \Sigma_g M)
    \xrightarrow{\gamma} \Gamma(\Sigma_g M).
\end{align}
In a local orthonormal frame $\{e_i\}_{i=1}^n$, this reads
\begin{align}
    \D\psi =\sum_{i} \gamma(e_i)\snabla_{e_i} \psi,
    \qquad \forall \psi\in \Gamma(\Sigma_g M).
\end{align}
Since $M$ is closed and $\D$ is a first-order elliptic operator, $\D$ is essentially self-adjoint on the Hilbert
space of $L^2$-spinors and has compact resolvent. In particular, $\operatorname{Spect}(\D)\subset\mathbb{R}$ is discrete, each eigenvalue has finite multiplicity, and the spectrum is unbounded in both directions in~$\R$;
see, for instance,~\cite{Friedrich2000Dirac,Ginoux2009Dirac}.

To track and compare two-sided unbounded discrete spectra, we follow~\cite{Nowaczyk2013continuity}
and work with monotone spectral enumerations in $\mathfrak{Mon}$ and the metric $d_a$ introduced in Definition~\ref{dfn:Mon}.
Note that the additive integer group $\bZ$ acts on $\mathfrak{Mon}$ by shifts: for any~$u\in\mathfrak{Mon}$ and~$k\in\bZ$, the~$k$-shift of~$u$ is given by
\begin{align}\label{eq:shift_action}
(u\cdot k)(j):=u(j+k),\qquad \forall j\in\bZ.
\end{align}
These shifts are isometries, so we can consider the quotient space
\begin{align}
\mathfrak{Conf}:=\mathfrak{Mon}/\bZ
\end{align}
and let~$\pi:\mathfrak{Mon}\to\mathfrak{Conf}$ be the quotient map.
For~$u\in\mathfrak{Mon}$ we write~$\bar u:=\pi(u)$.
Moreover, the space~$\mathfrak{Conf}$ is then equipped with the induced quotient metric
\begin{align}\label{eq:quotient_metric}
\bar d_a(\bar u,\bar v):=\inf_{k\in\bZ} d_a\bigl(u, v\cdot k\bigr),
\qquad
u\in\bar u,\ v\in\bar v.
\end{align}

\

We now recall the abstract setting of discrete self-adjoint families of type (A), using the terminology of~\cite{Nowaczyk2013continuity}, in order to state the relevant local full-spectrum continuity result.

Let $\mathcal H$ be a Hilbert space and let $\mathcal E$ be a topological space. Denote by $\mathcal C(\mathcal H)$ the set of all closed, densely
defined linear operators on $\mathcal H$.
Let $\mathcal X$ be a normed linear space, denote by $\mathcal B(\mathcal X,\mathcal H)$ the Banach space of all bounded linear operators from $\mathcal X$ to $\mathcal H$, here and in what follows, $\|\cdot\|_{\mathcal B}$ denotes the operator norm.

\begin{dfn}\label{def:typeA-family}
 A map $T:\mathcal E\to\mathcal C(\mathcal H)$, $e\mapsto T_e$, is called a \emph{self-adjoint family of type (A)} if:
\begin{enumerate}
\item there exists a dense subspace $\mathcal Z\subset\mathcal H$ such that $\Dom(T_e)=\mathcal Z$ for all $e\in\mathcal E$;
\item for each $e\in\mathcal E$, the operator $T_e$ is self-adjoint;
\item there exists a norm $|\cdot|$ on $\mathcal Z$ such that, for each $e\in\mathcal E$, the operator
      $T_e:(\mathcal Z,|\cdot|)\to (\mathcal H,\|\cdot\|_{\mathcal H})$ is bounded and the graph norm of $T_e$ is equivalent to $|\cdot|$;
\item the map $\mathcal E\to \mathcal B(\mathcal Z,\mathcal H)$, $e\mapsto T_e$, is continuous with respect to the operator norm on $\mathcal B(\mathcal Z,\mathcal H)$.
\end{enumerate}
If, in addition, each $T_e$ has compact resolvent, we call the family \emph{discrete}.
\end{dfn}
The following theorem applies to discrete self-adjoint families of type (A). For conciseness, we denote~$\mathfrak{s}_T^e := \mathfrak{s}_{T_e}$.
\begin{thm}[{\cite[Theorem 4.10]{Nowaczyk2013continuity}}]\label{thm: key expanding}
Let $T:\mathcal E \rightarrow \mathcal C (\mathcal H)$ be a discrete self-adjoint family of type (A). Then for any $e_0 \in\mathcal E$ and any
$\varepsilon>0$ there exists an open neighborhood $U \subset\mathcal E$ of $e_0$ such that
\begin{align}
\forall e \in U\ \exists k \in \mathbb{Z}\ \forall j \in \mathbb{Z}:\
d_a\!\left(\mathfrak{s}_T^{e_0}(j), \mathfrak{s}_T^e(j+k)\right)<\varepsilon,
\end{align}
here and in what follows, for $x,y\in\mathbb{R}$, $d_a(x,y):=|\arsinh(x)-\arsinh(y)|$.
\end{thm}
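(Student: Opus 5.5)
The strategy is to join $T_{e_0}$ to $T_e$ by a straight-line family in a single parameter $s\in[0,1]$. Along it, each eigenvalue moves with $|\dot\lambda|\lesssim\delta(1+|\lambda|)$, which is exactly a Lipschitz bound for $\arsinh\lambda$. A sorting argument then turns this into the statement about monotone enumerations, with the shift $k$ absorbing the loss of a canonical index $0$.

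\emph{Step 1: relative smallness.} Write $T_0:=T_{e_0}$ and $V_e:=T_e-T_0$, which is defined on $\mathcal Z$. By (3) of Definition~\ref{def:typeA-family} there is $c_0$ with $|\psi|\le c_0(\|\psi\|+\|T_0\psi\|)$. By (4), for every $\delta>0$ there is a neighbourhood $U$ of $e_0$ such that, for $e\in U$,
\begin{align}
\|V_e\psi\|\le \delta\bigl(\|\psi\|+\|T_0\psi\|\bigr),\qquad \psi\in\mathcal Z .
\end{align}
For $\delta<1/2$, Kato--Rellich shows that every $D_s:=T_0+sV_e$, $s\in[0,1]$, is self-adjoint on $\mathcal Z$. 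These operators have uniformly equivalent graph norms, $\|T_0\psi\|\le 2(\|D_s\psi\|+\|\psi\|)$, and compact resolvent, because the resolvent identity gives compactness of $(D_s-i)^{-1}$ from that of $(T_0-i)^{-1}$. Hence $\|V_e\psi\|\le 3\delta(\|\psi\|+\|D_s\psi\|)$. In particular, if $D_s\varphi=\lambda\varphi$ with $\|\varphi\|=1$, then
\begin{align}
\bigl|\langle V_e\varphi,\varphi\rangle\bigr|\le 3\delta\,(1+|\lambda|)\le 3\sqrt2\,\delta\sqrt{1+\lambda^2}.
\end{align}

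\emph{Step 2: analytic branches and Hellmann--Feynman.} The map $s\mapsto D_s$ is a holomorphic family of type (A) in Kato's sense: it has constant domain and depends affinely on $s$. Since $s$ is real and each $D_s$ is self-adjoint with compact resolvent, Kato--Rellich theory provides real-analytic eigenvalue branches $\lambda_j(s)$ and analytic orthonormal eigenvectors $\varphi_j(s)$ on each window where the spectral projection has finite rank. These can be continued along all of $[0,1]$. Moreover, at every $s$ the multiset $\{\lambda_j(s)\}_j$ is the full spectrum of $D_s$ with multiplicities. The Hellmann--Feynman identity gives $\lambda_j'(s)=\langle V_e\varphi_j,\varphi_j\rangle$, so by Step 1
\begin{align}
\bigl|\tfrac{d}{ds}\arsinh\lambda_j(s)\bigr|=\frac{|\lambda_j'|}{\sqrt{1+\lambda_j^2}}\le 3\sqrt2\,\delta .
\end{align}
This bound also rules out branches escaping to $\pm\infty$ for $s\in[0,1]$. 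Consequently the correspondence $\lambda_j(0)\mapsto\lambda_j(1)$ is a multiplicity-preserving bijection between $\operatorname{Spect}(T_0)$ and $\operatorname{Spect}(T_e)$ that moves every point by at most $\varepsilon:=3\sqrt2\,\delta$ in $\arsinh$-distance.

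\emph{Step 3: sorting stability.} Let $u=\mathfrak s_T^{e_0}$ and $v=\mathfrak s_T^{e}$, and put $\tilde u=\arsinh\circ u$, $\tilde v=\arsinh\circ v$; the map $\arsinh$ is increasing. A bijection $\sigma:\bZ\to\bZ$ with $|\tilde u(j)-\tilde v(\sigma j)|\le\varepsilon$ gives the counting inequalities
\begin{align}
\sharp\{j:\tilde v(j)\le a\}\ \text{vs.}\ \sharp\{j:\tilde u(j)\le a\pm\varepsilon\},
\end{align}
understood relative to a fixed base point. From these one shows that the two-sided counting functions differ by the same integer $k$ at all levels. Concretely, choose $k$ so that $\tilde v(j+k)$ and $\tilde u(j)$ lie in the same position relative to a gap that is never crossed. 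Then $|\tilde u(j)-\tilde v(j+k)|\le\varepsilon$ for all $j$. Choosing $\delta<\varepsilon_{\rm target}/(3\sqrt2)$ yields the theorem.

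\emph{Main obstacle.} The delicate point is Step 2: producing globally defined branches on $[0,1]$ and, with them, a genuine bijection of the full two-sided spectra. Local analytic perturbation theory only gives branches inside finite spectral windows. My first attempt would be to exhaust $\R$ by windows $[-R,R]$ whose endpoints avoid the spectrum. On each window the a priori speed bound shows that only eigenvalues starting in $[-R',R']$ can reach the window, with $R'$ controlled by $R$. The bijections on nested windows are compatible up to relabelling within clusters, and in the limit they give a global multiset correspondence. If that bookkeeping becomes awkward, the fallback is to argue only with counting functions: the count $N_s(a,b)$ of eigenvalues in $(a,b)$ can change only when a branch crosses $a$ or $b$, and the speed bound controls such crossings. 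The counting comparison in Step 3 needs nothing more than this.
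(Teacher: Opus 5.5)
Your proof is correct, and it takes a genuinely different route from the paper. The paper gives no argument for this statement: it imports it as a black box from Nowaczyk. You prove it directly by joining $T_{e_0}$ to $T_e$ through the affine family $D_s=T_{e_0}+s(T_e-T_{e_0})$. Once $c_0\|T_e-T_{e_0}\|_{\mathcal B(\mathcal Z,\mathcal H)}<\tfrac12$, this is a self-adjoint holomorphic family of type (A) with compact resolvent. From there you combine three ingredients:
\begin{itemize}
\item Hellmann--Feynman along analytic branches;
\item the relative bound $|\lambda'|\le 3\delta(1+|\lambda|)$, which gives an $\arsinh$-Lipschitz estimate;
\item a bi-infinite sorting step.
\end{itemize}

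What your route buys is that it is self-contained and quantitative. It yields
\begin{align}
\inf_{k\in\bZ} d_a\bigl(\mathfrak s_T^{e_0},\mathfrak s_T^{e}\cdot k\bigr)\le 3\sqrt2\,c_0\,\|T_e-T_{e_0}\|_{\mathcal B(\mathcal Z,\mathcal H)},
\end{align}
a local Lipschitz modulus rather than mere continuity. Combined with the paper's Step~4 operator-norm estimate and its $k(A)=0$ argument, this would upgrade Theorem~\ref{thm: C0_CONT} to a local Lipschitz bound in $\|\cdot\|_{\mathcal A}$.

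Your mechanism (Hellmann--Feynman, then $\arsinh$-Lipschitz control, then sorting) is the same one the paper uses for Theorem~\ref{thm:MonLip_prelim}. The advantage of the affine homotopy is that analyticity makes the branches well defined and differentiable through crossings. For general $C^1$ families the paper handles crossings only informally (``after relabelling through the crossing''). Citing Nowaczyk, by contrast, keeps the paper short and relies on his general machinery.

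Two points should be tightened.

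\emph{The ``main obstacle'' in Step~2.} This is exactly the content of Kato's theorem on self-adjoint holomorphic families of type (A) with compact resolvent (Theorem~VII.3.9 in Kato's book). It provides functions $\mu_n,\varphi_n$, holomorphic on a real interval containing $[0,1]$, such that at every $s$:
\begin{itemize}
\item the $\mu_n(s)$ are all the repeated eigenvalues of $D_s$;
\item the $\varphi_n(s)$ form a complete orthonormal eigenbasis.
\end{itemize}
Its proof rules out escape to infinity using precisely a growth bound $|\mu_n'|\le a+b|\mu_n|$ of the kind you derive. So you need no window bookkeeping; just cite it.

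\emph{Step~3 is too vague as written.} A clean version is to set
\begin{align}
k:=\sharp\{j<0:\sigma(j)\ge 0\}-\sharp\{j\ge 0:\sigma(j)<0\}.
\end{align}
Both sets are finite, by properness and the bounded displacement. The function $g(m,n):=\sharp\{j\le m:\sigma(j)>n\}-\sharp\{j>m:\sigma(j)\le n\}$ then satisfies $g(m,n)=m-n+k$.
\begin{itemize}
\item If $\tilde v(m+k)>\tilde u(m)+\varepsilon'$, then $\sigma$ maps $\{j\le m\}$ into $\{j'\le m+k-1\}$. This forces $g(m,m+k-1)\le 0$, contradicting $g(m,m+k-1)=1$.
\item If $\tilde v(m+k)<\tilde u(m)-\varepsilon'$, then $\sigma^{-1}$ maps $\{j'\le m+k\}$ into $\{j\le m-1\}$. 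This forces $g(m-1,m+k)\ge 0$, contradicting $g(m-1,m+k)=-1$.
\end{itemize}
Your gap-based version also works, but only after first shrinking $\delta$ so that $3\sqrt2\,\delta$ is less than half of some positive gap of $\arsinh\circ\mathfrak s_T^{e_0}$. It then needs the half-infinite sorting lemma on each side of that gap.
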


\begin{rmk}
\label{rem:dirac-spectrum-continuity}
Let $\mathcal{R}(M)$ denote the space of all Riemannian metrics on $M$, endowed with the $C^1$-topology.
By the standard isometric identifications of spinor bundles, after fixing a reference metric and transporting
$L^2$-spinors to a common Hilbert space, the assignment
\begin{align}
g\longmapsto \D_g
\end{align}
can be viewed as a family of operators on a fixed Hilbert space. Its common domain can be taken as the corresponding
$H^1$-space, and the family satisfies Definition~\ref{def:typeA-family}; moreover, each $\D_g$ has compact
resolvent, so the family is discrete. Hence Theorem~\ref{thm: key expanding} implies that for any
$g_0\in\mathcal{R}(M)$ and any $\varepsilon>0$ there exists a neighborhood $U$ of $g_0$ such that for every
$g\in U$ there exists $k\in\mathbb{Z}$ with
\begin{align}
\forall j\in\mathbb{Z}:\quad
d_a\!\left(\mathfrak{s}_{\D_{g_0}}(j),\,\mathfrak{s}_{\D_g}(j+k)\right)<\varepsilon .
\end{align}
In other words, the spectral configuration map
\begin{align}
(\mathcal{R}(M),C^1)\longrightarrow (\mathfrak{Conf},\bar d_a),\qquad g\longmapsto [\mathfrak{s}_{\D_g}]
\end{align}
is continuous at $g_0$.
\end{rmk}

\subsection{Hellmann--Feynman variational identity}\label{subsec:HF}

Under \eqref{hyp:H1},
we will differentiate eigenvalues of the weighted Dirac operators. The resulting first variation formula is an instance of the classical \emph{Hellmann--Feynman variational identity},  which arises from quantum mechanism~\cite{Feynman1939Force}.
For recent discussions and extensions beyond the self-adjoint setting, see e.g.\ \cite{Hajong2024Hellmann}.

Let $\mathcal H$ be a complex Hilbert space and $I\subset\mathbb R$ an open interval.
Let
\begin{align}
H(t): \mathcal{Z}\subset \mathcal H\to \mathcal H,\qquad t\in I,
\end{align}
be a family of self-adjoint operators with a common dense domain $\mathcal{Z}$.
Assume that $H(t)$ is $C^{1}$, i.e.\ for every $u\in \mathcal{Z}$
the map $t\mapsto H(t)u\in\mathcal H$ is of class $C^{1}$, and define~$\dot{H}(t)$ by
\begin{align}
\dot H(t)u:=\partial_t\bigl(H(t)u\bigr),\qquad u\in\mathcal{Z}.
\end{align}

Assume that $H(t)$ admits a simple eigenvalue branch $\lambda_n(t)\in\mathbb R$,
with associated rank-one spectral projection $P_n(t)$, and let $\psi_n(t)\in \mathcal{Z}$ be a
locally $C^{1}$ choice of normalized eigenvector spanning $\Ran P_n(t)$:
\begin{align}
H(t)\psi_n(t)=\lambda_n(t)\psi_n(t),
\qquad
\|\psi_n(t)\|_{\mathcal H}=1.
\end{align}
Then $\lambda_n$ is differentiable and satisfies the Hellmann--Feynman identity
\begin{equation}\label{eq:HF}
\lambda_n'(t)=\big\langle \psi_n(t),\,\dot H(t)\psi_n(t)\big\rangle_{\mathcal H}.
\end{equation}
Equivalently, since $P_n(t)$ is the orthogonal projection onto $\mathrm{span}\{\psi_n(t)\}$, we have
\begin{align}
P_n(t)u=\langle u,\psi_n(t)\rangle_{\mathcal H}\,\psi_n(t),\qquad u\in\mathcal H.
\end{align}
Hence, we have
\begin{equation}\label{eq:HF-trace}
\lambda_n'(t)=\operatorname{Tr}\!\big(P_n(t)\dot H(t)\big).
\end{equation}

If $H(t_0)$ has an eigenvalue $\lambda_0$ of finite multiplicity $m$, let~$P(t_0)$ be the associated spectral projection. For an orthonormal basis~$\{\phi_i\}_{i=1}^m$ of~$\Ran P(t_0)$ define the Hermitian matrix
\begin{align}
\mathsf M_{ij}:=\big\langle \phi_i,\,\dot H(t_0)\phi_j\big\rangle_{\mathcal H},\qquad 1\le i,j\le m.
\end{align}
Then the first-order slopes of the eigenvalue branches through $t_0$ are given by the eigenvalues of the finite-dimensional operator~$P(t_0)\dot H(t_0)P(t_0)$.

For any $C^{1}$ family of normalized vectors $\psi(t)\in\mathcal{Z}$ with
$\|\psi(t)\|_{\mathcal H}=1$, set
\begin{align}
\lambda(t):=\big\langle \psi(t),\,H(t)\psi(t)\big\rangle_{\mathcal H}.
\end{align}
A direct differentiation yields the exact identity
\begin{equation}\label{eq:trial-derivative}
 \lambda'(t)=\big\langle \psi(t),\,\dot H(t)\psi(t)\big\rangle_{\mathcal H}
+2\,\Re\big\langle \dot{\psi}(t),\,(H(t)- \lambda(t))\psi(t)\big\rangle_{\mathcal H}.
\end{equation}
Here we used the fact that~$\lambda(t)$ is real and hence~$\Re\langle\dot{\psi}(t),\lambda(t)\psi(t)\rangle_{\mathcal{H}}= \lambda(t)\Re\langle\dot{\psi}(t),\psi(t)\rangle_{\mathcal{H}}= 0$.
For an eigenvector, $(H(t)- \lambda(t))\psi(t)=0$ and \eqref{eq:trial-derivative}
reduces to \eqref{eq:HF}. In variational approximations based on $t$-dependent trial subspaces, the second term in
\eqref{eq:trial-derivative} does not vanish in general.
It quantifies the additional contribution caused by the $t$-dependence of the chosen trial family
$\psi(t)$ through the residual~$(H(t)-\lambda(t))\psi(t)$.

\section[C0 weight]{\texorpdfstring{The Continuity of Weighted Spectra with $C^0$ Weight}{The Continuity of Weighted Spectra with C0 Weight}}\label{sec:C0weight}

For each $A\in\mathcal P_{\Lambda_1,\Lambda_2}$, we work with the conjugated self-adjoint Dirac-type operator on the fixed Hilbert space $L^2(M, \Sigma_gM)$
\begin{align}
\widetilde D(A):=A^{-1/2}\D A^{-1/2},\qquad \Dom(\widetilde D(A))=H^1(M, \Sigma_gM),
\end{align}
and denote by $\mathfrak{s}^A\in\mathfrak{Mon}$ its associated ordered spectrum. The goal of this section is to prove Theorem~\ref{thm: C0_CONT}, namely, the continuity of the map~$A\mapsto \mathfrak{s}^A$ with respect to the $C^0$-topology on the space $\mathcal P_{\Lambda_1,\Lambda_2}$ and the metric $d_a$. We consider weights of class~$W^{1,p}$ for some $p>n$, although this may not be the optimal regularity assumption. Note that, according to the Sobolev embedding theorem, $W^{1,p}\hookrightarrow L^{\infty}$ for~$p>n$. This allows us to obtain uniform control of $A^{\pm1/2}$ and their derivatives, and to prove that the map $A\mapsto \widetilde D(A)$ is continuous in the operator norm as a map into~$\mathcal B(H^1(M, \Sigma_gM),L^2(M, \Sigma_gM))$.

We begin with a Sylvester-type equation that provides pointwise bounds for derivatives of $A^{\pm1/2}$.

\begin{lemma}[{\cite[Theorem 9.2]{Rajendra1997AX-XBequalsY}}]\label{lem: Sylvester solution}
Let $A$ and $B$ be operators whose spectra are contained in the open right half plane and the open left half plane, respectively.
Then the unique solution of the equation $AX-XB=Y$ is expressed as
\begin{align}
X=\int_0^{\infty} e^{-tA} Y e^{tB} \, dt.\notag
\end{align}
\end{lemma}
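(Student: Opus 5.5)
The plan is to prove the lemma directly for bounded operators (in our application $A$ and $B$ are fiberwise endomorphisms, i.e.\ matrices, so boundedness is automatic). The argument has three steps: an exponential decay estimate for the semigroups, verification that the integral solves the equation, and uniqueness.

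\emph{Decay.} Since $\sigma(A)$ is compact and contained in $\{\Re z>0\}$, there is $\alpha>0$ with $\sigma(-A)\subset\{\Re z<-\alpha\}$. Likewise there is $\beta>0$ with $\sigma(B)\subset\{\Re z<-\beta\}$. For bounded operators the growth bound of $e^{tT}$ equals the spectral bound of $T$. One way to see this is to write
\begin{align}
e^{tT}=\frac{1}{2\pi i}\oint_\Gamma e^{tz}(z-T)^{-1}\,dz,
\end{align}
with $\Gamma$ a contour enclosing $\sigma(T)$ inside $\{\Re z<-\alpha\}$. In the matrix case one can instead use the Jordan form directly. Either way we obtain
\begin{align}
\|e^{-tA}\|\le C e^{-\alpha t},\qquad \|e^{tB}\|\le C e^{-\beta t}\qquad (t\ge0).
\end{align}
Hence $\|e^{-tA}Ye^{tB}\|\le C^2\|Y\|e^{-(\alpha+\beta)t}$, so the integral defining $X$ converges absolutely in operator norm.

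\emph{Existence.} Set $F(t):=e^{-tA}Ye^{tB}$. Then
\begin{align}
F'(t)=-AF(t)+F(t)B.
\end{align}
Integrate over $[0,R]$ and let $R\to\infty$. Using $F(R)\to0$ and the boundedness of $A$ and $B$, which lets us pull them out of the integral, we get $-Y=-AX+XB$, that is, $AX-XB=Y$.

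\emph{Uniqueness.} By linearity it suffices to show that $AX=XB$ forces $X=0$. Put $G(t):=e^{-tA}Xe^{tB}$. Then $G'(t)=e^{-tA}(-AX+XB)e^{tB}=0$, so $G$ is constant and $X=G(0)=G(t)$ for all $t\ge0$. By the decay estimate, $\|G(t)\|\le C^2\|X\|e^{-(\alpha+\beta)t}\to0$, so $X=0$.

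The only nontrivial ingredient is the decay step, namely passing from the spectral location to the exponential bound on the semigroups. The Riesz--Dunford contour estimate handles this cleanly: choose $\Gamma$ as the boundary of a bounded region containing $\sigma(T)$ and lying in $\{\Re z\le-\alpha\}$. On $\Gamma$ we have $|e^{tz}|\le e^{-\alpha t}$, and the resolvent is uniformly bounded on the compact set $\Gamma$. The rest of the proof is routine calculus of operator-valued functions.
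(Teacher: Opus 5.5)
Your proof is correct. The paper gives no proof of its own and only cites Bhatia--Rosenthal, Theorem~9.2.

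Your existence argument is essentially the standard one from that reference: exponential decay of the integrand, then integrating $-\frac{d}{dt}\bigl(e^{-tA}Ye^{tB}\bigr)$ over $[0,\infty)$.

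Your uniqueness step differs. You show that $G(t)=e^{-tA}Xe^{tB}$ is constant and decays, which is a self-contained replacement for the Sylvester--Rosenblum theorem that the reference uses. That theorem gives uniqueness under the weaker hypothesis $\sigma(A)\cap\sigma(B)=\emptyset$, whereas your argument uses the half-plane separation directly.

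In the paper's application $S$ and $T$ are self-adjoint with $S,T\ge m\,\Id$. There your Riesz--Dunford decay estimate reduces to $\|e^{-sS}\|\le e^{-sm}$ from the spectral theorem, with constant $C=1$. This is what yields the sharp bound $\|X\|\le \|Y\|/(2m)$ in \eqref{eq:sylvester_inverse_bound_sec3}. For non-normal operators your constant $C$ could be large.
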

The special case relevant for our application arises when the unknown $X$ is coupled to two
fiberwise positive self-adjoint endomorphisms through a \emph{symmetrized} Sylvester map.
Let $S,T$ be fiberwise self-adjoint endomorphisms and assume that there exists $m>0$ such that
$S\ge m\,\Id$ and $T\ge m\,\Id$.
For each endomorphism~$X$, consider the linear map
\begin{align}
\mathcal L_{S,T}(X):=SX+XT.\notag
\end{align}
Since the spectrum of $S$ satisfies $\sigma(S)\subset [m,\infty)$ and $\sigma(-T)\subset (-\infty,-m]$, Lemma~\ref{lem: Sylvester solution}
applies to the equation $SX-X(-T)=Y$. In particular, $\mathcal L_{S,T}$ is invertible and the unique
solution of $\mathcal L_{S,T}(X)=Y$ is given by
\begin{equation}\label{eq:sylvester_inverse_formula_sec3}
X=\int_0^\infty e^{-sS}\,Y\,e^{-sT}\,ds.
\end{equation}
Moreover, the positivity of $S$ and $T$ implies $\|e^{-sS}\|_{\mathcal B}\le e^{-sm}$ and
$\|e^{-sT}\|_{\mathcal B}\le e^{-sm}$, hence
\begin{equation}\label{eq:sylvester_inverse_bound_sec3}
\|X\|_{\mathcal B}
\le \int_0^\infty \|e^{-sS}\|_{\mathcal B}\,\|Y\|_{\mathcal B}\,\|e^{-sT}\|_{\mathcal B}\,ds
\le \frac{1}{\lambda_{\min}(S)+\lambda_{\min}(T)}\|Y\|_{\mathcal B}
\le \frac{1}{2m}\|Y\|_{\mathcal B}.
\end{equation}
These estimates are crucial for the control of the conjugated Dirac-type operator $\widetilde D(A)$ on the fixed domain $H^1(M,\Sigma_gM)$. To this end, we also require the following boundedness for $W^{1,p}$ endomorphisms.
\begin{lemma}\label{lem:H1_multiplier_W1p_sec3}
Let $F \in W^{1,p}\bigl(M,\End(\Sigma_g M)\bigr)$ with $p > n$. Then each~$\psi \in H^1(M,\Sigma_g M)$ is mapped to~$F\psi\in H^1(M,\Sigma_g M)$. Moreover, the map
\begin{align}
H^1(M,\Sigma_g M) \to H^1(M,\Sigma_g M),\qquad \psi \longmapsto F\psi,
\end{align}
is bounded.
\end{lemma}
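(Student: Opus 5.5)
The plan is to combine the Sobolev embedding $W^{1,p}\hookrightarrow L^\infty$ (valid since $p>n$) with Hölder's inequality and the embedding $H^1\hookrightarrow L^{q}$, where $\frac1q=\frac12-\frac1p$. Both $F\psi$ and $\nabla(F\psi)$ should then be controlled by $\|F\|_{\mathcal A}\|\psi\|_{H^1}$.

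\textbf{Step 1: the $L^2$ part.} For $\psi$ smooth, I would first bound the zeroth-order term pointwise: $|F\psi|\le|F|_{\mathcal B}|\psi|$. Hence
\begin{align}
\|F\psi\|_{L^2}\le \|F\|_{L^\infty}\|\psi\|_{L^2}\le C\|F\|_{\mathcal A}\|\psi\|_{L^2},
\end{align}
where $C$ is the Sobolev constant for $W^{1,p}\hookrightarrow L^\infty$ on endomorphism fields.

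\textbf{Step 2: the derivative.} Using the product rule for the induced connection on $\End(\Sigma_gM)$, I would write $\snabla(F\psi)=(\nabla^gF)\psi+F\snabla\psi$. The second term is handled exactly as in Step 1, with bound $C\|F\|_{\mathcal A}\|\snabla\psi\|_{L^2}$. For the first term I would apply Hölder's inequality:
\begin{align}
\|(\nabla^gF)\psi\|_{L^2}\le\|\nabla^gF\|_{L^p}\|\psi\|_{L^{q}},\qquad \tfrac1q=\tfrac12-\tfrac1p .
\end{align}
It then remains to check that $\psi\in L^q$ is controlled by $\|\psi\|_{H^1}$.
\begin{itemize}
\item If $n\ge3$, we have $q<\frac{2n}{n-2}$, since $\frac1q>\frac12-\frac1n$. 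So $H^1\hookrightarrow L^{q}$ by the Sobolev embedding on a closed manifold.
\item If $n=2$, then $H^1\hookrightarrow L^r$ for every $r<\infty$.
\item If $n=1$, then $H^1\hookrightarrow L^\infty$.
\end{itemize}
In all cases $\|\psi\|_{L^q}\le C'\|\psi\|_{H^1}$. Collecting the estimates gives
\begin{align}
\|F\psi\|_{H^1}\le C''\|F\|_{\mathcal A}\|\psi\|_{H^1}
\end{align}
for smooth $\psi$.

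\textbf{Step 3: extension.} By density of $\Gamma(\Sigma_gM)$ in $H^1$, the estimate extends to all of $H^1(M,\Sigma_gM)$. One also has to justify that the weak derivative of $F\psi$ really is $(\nabla^gF)\psi+F\snabla\psi$ when $F$ is only $W^{1,p}$. For this I would approximate $F$ by smooth $F_k\to F$ in $W^{1,p}$, which also gives $F_k\to F$ in $L^\infty$. The products $F_k\psi$ then converge in $H^1$ by the bilinear estimate just proved, and the limit is $F\psi$ in $L^2$. Hence $F\psi\in H^1$, and the map $\psi\mapsto F\psi$ is bounded with operator norm $\lesssim\|F\|_{\mathcal A}$.

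The main obstacle, though mild, is the Hölder–Sobolev exponent bookkeeping in Step 2. This is exactly where $p>n$ is used: it makes $q$ strictly subcritical, which keeps $\psi\in L^q$ under $H^1$ control and gives the uniform $L^\infty$ bound on $F$.
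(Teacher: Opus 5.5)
Your proposal is correct and follows essentially the same route as the paper: Leibniz rule, H\"older with $q=\frac{2p}{p-2}$, and the embeddings $W^{1,p}\hookrightarrow L^\infty$ and $H^1\hookrightarrow L^q$. The differences are minor: you bound the zeroth-order term by $\|F\|_{L^\infty}\|\psi\|_{L^2}$ rather than $\|F\|_{L^p}\|\psi\|_{L^q}$, and you supply the density justification of the product rule that the paper omits.

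One small caveat concerns your $n=1$ bullet, which fails for $1<p<2$. There $\frac12-\frac1p<0$, so no such $q$ exists. In fact the lemma itself is false in that case: take $F=f\,\Id$ with $f\in W^{1,p}\setminus H^1$ and $\psi$ a smooth nowhere-vanishing spinor. So you need $n\ge 2$ (or $p\ge 2$), which the paper tacitly assumes by writing $p>n\ge 2$.
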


\begin{proof}
Denote by~$\snabla$ the spin connection on~$\Sigma_g M$, and by~$\nabla^g$ the induced covariant derivative on~$\End(\Sigma_g M)$.

For~$\psi \in H^1$ we estimate the $H^1$-norm of $F\psi$:
\begin{align}
\|F\psi\|_{H^1}^2 = \|F\psi\|_{L^2}^2 + \|\snabla(F\psi)\|_{L^2}^2 .
\end{align}
Using the pointwise bound~$|F\psi| \le \|F\|_{L^\infty}|\psi|$ and the Sobolev embedding~$W^{1,p} \hookrightarrow L^\infty$, since~$p>n\geq 2$, we obtain
\begin{align}
\|F\psi\|_{L^2} \le \|F\|_{L^p} \|\psi\|_{L^q}.
\end{align}
where~$\frac{1}{2}=\frac{1}{p}+\frac{1}{q}$, i.e.~$q=\frac{2p}{p-2} > 2$.

By the Leibniz rule,
\begin{align}
\snabla(F\psi) = (\nabla^g F)\psi + F(\snabla\psi).
\end{align}
Hence
\begin{align}
\|\snabla(F\psi)\|_{L^2}
\leq& \|(\nabla^g F)\psi\|_{L^2} + \|F(\snabla\psi)\|_{L^2} \\
\leq&  \|\nabla^g F\|_{L^p} \|\psi\|_{L^q}
        +  \|F\|_{L^\infty} \|\snabla\psi\|_{L^2}.
\end{align}
Let~$C_1$ be the Sobolev constant for the embedding~$W^{1,p}\hookrightarrow L^\infty$ on~$M$, and~$C_2$ be the constant for the embedding~$H^1\hookrightarrow L^q$ with~$q$ as above.
Then
\begin{align}
    \|F\psi\|_{H^1}
    \leq& C_2\|F\|_{W^{1,p}} \|\psi\|_{H^1} + C_1\|F\|_{W^{1,p}}\|\psi\|_{H^1}
    =(C_1+C_2)\|F\|_{W^{1,p}}\|\psi\|_{H^1}.
\end{align}
Hence the map $\psi \mapsto F\psi$ is bounded from $H^1$ to $H^1$.
\end{proof}
That is,~$W^{1,p}\bigl(M,\End(\Sigma_g M)\bigr)$ embeds into $\mathcal{B}(H^1(M,\Sigma_g M),H^1(M,\Sigma_g M))$, the space of bounded linear maps, with
\begin{align}
 \|F\|_{\mathcal{B}(H^1,H^1)} \leq (C_1+ C_2)\|F\|_{W^{1,p}}.
\end{align}
The next theorem records the basic properties of the weighted operators and of the conjugation map.

\begin{thm}\label{thm:isometric_weighted_dirac_realparam}
Let $I\subset\mathbb R$ be an interval, and let~$I\ni t\mapsto A_t\in\mathcal P_{\Lambda_1,\Lambda_2}$
be continuous with respect to~$\|\cdot\|_{\mathcal A}$.

\begin{enumerate}
\item[(1)] The map
\begin{align}
U_{A_t}:\mathcal H_{A_t}\longrightarrow L^2(M, \Sigma_gM),
\qquad
U_{A_t}\psi:=A_t^{1/2}\psi,\notag
\end{align}
is a linear \emph{isometric isomorphism}, and
\begin{align}
\widetilde D(A_t) = U_{A_t}\,B(A_t)\,U_{A_t}^{-1}
\quad\text{acts on }L^2(M, \Sigma_gM)\text{ with domain }H^1(M, \Sigma_gM).\notag
\end{align}
That is, the following diagram commutes:
\begin{center}
\begin{tikzcd}[column sep=large, row sep=large]
H^1(M, \Sigma_gM) \arrow[r, "\tilde{D}(A_t)"] \arrow[d, "A_t^{-1/2}"'] & L^2(M, \Sigma_gM) \\
\bigl(H^1(M, \Sigma_gM), (\cdot, \cdot)_{A_t}\bigr) \arrow[r, "A_t^{-1}\D"'] & \mathcal H_{A_t} \arrow[u, "A_t^{1/2}"'].
\end{tikzcd}
\end{center}
\item[(2)] The operator $\widetilde D(A_t)$ is self-adjoint on $H^1(M, \Sigma_gM)$, hence closed and densely defined. Consequently $B(A_t)$ is self-adjoint on $\mathcal H_{A_t}$. Moreover, $B(A_t)$ and $\widetilde D(A_t)$ are \emph{isospectral}.
\item[(3)] The map
\begin{align}
I\longrightarrow\mathcal B\bigl(H^1(M, \Sigma_gM),L^2(M, \Sigma_gM)\bigr),
\qquad
t\longmapsto \widetilde D(A_t),\notag
\end{align}
is continuous, with respect to the operator norm on $\mathcal B(H^1,L^2)$.
\end{enumerate}

\noindent
In particular, the map
\begin{align}
\widetilde D:\mathcal P_{\Lambda_1,\Lambda_2}\longrightarrow\mathcal B\bigl(H^1(M, \Sigma_gM),L^2(M, \Sigma_gM)\bigr),
\qquad
A\longmapsto \widetilde D(A),\notag
\end{align}
is continuous.
\end{thm}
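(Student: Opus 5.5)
Parts (1) and (2) are algebra plus multiplier estimates, and part (3) is where the real work lies.

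For (1), I first check the isometry. Since $A_t$ is symmetric and fiberwise positive, $A_t^{1/2}$ is a well-defined symmetric, fiberwise positive endomorphism with $\Lambda_1^{1/2}\Id\le A_t^{1/2}\le\Lambda_2^{1/2}\Id$. Then
\begin{align}
\|A_t^{1/2}\psi\|_{L^2}^2=\int_M\langle A_t\psi,\psi\rangle\,dv_g=(\psi,\psi)_{A_t},
\end{align}
and $A_t^{-1/2}$ is a bounded inverse, so $U_{A_t}$ is an isometric isomorphism. The conjugation identity is immediate:
\begin{align}
U_{A_t}B(A_t)U_{A_t}^{-1}=A_t^{1/2}A_t^{-1}\D A_t^{-1/2}=\widetilde D(A_t).
\end{align}
For the domains, I need $A_t^{\pm1/2}\in W^{1,p}$, so that Lemma~\ref{lem:H1_multiplier_W1p_sec3} shows $A_t^{\pm1/2}$ map $H^1$ bijectively onto $H^1$. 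To get this, I differentiate $A^{1/2}A^{1/2}=A$, which gives the Sylvester equation
\begin{align}
A^{1/2}\,\nabla A^{1/2}+\nabla A^{1/2}\,A^{1/2}=\nabla A,
\end{align}
that is, $\mathcal L_{S,S}(X)=Y$ with $S=A^{1/2}\ge\Lambda_1^{1/2}\Id$, $X=\nabla A^{1/2}$ and $Y=\nabla A$. By \eqref{eq:sylvester_inverse_bound_sec3} this yields $|\nabla A^{1/2}|\le(2\Lambda_1^{1/2})^{-1}|\nabla A|$ pointwise. To make this rigorous in $W^{1,p}$, I would run it on smooth approximations or via difference quotients. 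For $A^{-1/2}$ I use $\nabla A^{-1/2}=-A^{-1/2}(\nabla A^{1/2})A^{-1/2}$.

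For (2), I write $\widetilde D=A^{-1/2}\D A^{-1/2}$. On $H^1$ it is symmetric, since $A^{-1/2}$ is a bounded symmetric multiplier and $\D$ is symmetric. For self-adjointness, let $\phi\in\Dom(\widetilde D^*)$. Then $\chi:=A^{-1/2}\phi$ lies in $\Dom(\D^*)=H^1$ by elliptic regularity, using that $A^{-1/2}$ is an $H^1$-isomorphism. Indeed, $\langle \D A^{-1/2}\psi, A^{-1/2}\phi\rangle=\langle\psi,\widetilde D^*\phi\rangle$ for all $\psi\in H^1$, and $A^{-1/2}$ ranges over all of $H^1$. Hence $\phi=A^{1/2}\chi\in H^1$. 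Isospectrality follows from the unitary equivalence in (1), and self-adjointness of $B(A_t)$ transfers through $U_{A_t}$.

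For (3), the main obstacle, I write $R_t:=A_t^{-1/2}$ and decompose
\begin{align}
\widetilde D(A_t)-\widetilde D(A_s)=(R_t-R_s)\D R_t+R_s\D(R_t-R_s).
\end{align}
The first term is bounded in $\mathcal B(H^1,L^2)$ by $\|R_t-R_s\|_{L^\infty}\,\|\D\|\,\|R_t\|_{\mathcal B(H^1,H^1)}$. The second is bounded by $\|R_s\|_{L^\infty}\,\|\D\|\,\|R_t-R_s\|_{\mathcal B(H^1,H^1)}$. Both are controlled by $\|R_t-R_s\|_{W^{1,p}}$ via Lemma~\ref{lem:H1_multiplier_W1p_sec3}, and $\|R_t\|_{W^{1,p}}$ is uniformly bounded by the estimate from (1). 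So it suffices to show that $A\mapsto A^{-1/2}$ is continuous in $W^{1,p}$ on $\mathcal P_{\Lambda_1,\Lambda_2}$.

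For the $L^\infty$ part of this continuity I again use Sylvester. With $S=A^{1/2}$ and $T=B^{1/2}$, we have $S(S-T)+(S-T)T=A-B$, hence
\begin{align}
|A^{1/2}-B^{1/2}|\le(2\Lambda_1^{1/2})^{-1}|A-B|,
\end{align}
and inverses are Lipschitz likewise. For the derivative part, I subtract the two Sylvester equations for $X_A=\nabla A^{1/2}$ and $X_B=\nabla B^{1/2}$, which gives
\begin{align}
\mathcal L_{S,S}(X_A-X_B)=\nabla A-\nabla B-\bigl((S-T)X_B+X_B(S-T)\bigr).
\end{align}
The right-hand side is bounded in $L^p$ by $\|\nabla(A-B)\|_{L^p}+2\|S-T\|_{L^\infty}\|\nabla B\|_{L^p}$, and $\|S-T\|_{L^\infty}\lesssim\|A-B\|_{W^{1,p}}$ by the Sobolev embedding. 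This gives local Lipschitz continuity of $A\mapsto A^{1/2}$ and then of $A\mapsto A^{-1/2}$ in $W^{1,p}$. The final "in particular" statement follows since these estimates never used the parameter $t$.
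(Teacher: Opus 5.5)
Your proposal is correct and follows essentially the same route as the paper: the Sylvester-equation bounds for $\nabla A^{1/2}$ and for differences of square roots (you invert $\mathcal L_{S,S}$ where the paper inverts $\mathcal L_{U_t,U_s}$, which is immaterial), the adjoint-domain argument using the $H^1$-multiplier lemma for self-adjointness, and the same splitting $(R_t-R_s)\D R_t+R_s\D(R_t-R_s)$ for operator-norm continuity. Two small points, neither of which affects the argument: your remark that differentiating $A^{1/2}A^{1/2}=A$ needs a priori justification (e.g.\ by smooth approximation) addresses a step the paper simply asserts, and the coefficient of $\|S-T\|_{L^\infty}\|\nabla B\|_{L^p}$ in your $L^p$ bound should be $\Lambda_1^{-1/2}$ rather than $2$, because $\|X_B\|_{L^p}\le(2\Lambda_1^{1/2})^{-1}\|\nabla B\|_{L^p}$.
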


\begin{proof}
We write $\nabla^{g}$ for the covariant derivative on~$\End(\Sigma_gM)$ induced by the spin connection on~$\Sigma_gM$.

\smallskip
\noindent\textbf{Step 1.} We investigate the continuity properties of $A_t^{\frac{1}{2}}$ and $A_t^{-\frac{1}{2}}$ with respect to $t$ in $\mathcal P_{\Lambda_1,\Lambda_2}$. Fix $t\in I$. By the definition of $\mathcal P_{\Lambda_1,\Lambda_2}$, $A_t$ is fiberwisely self-adjoint and uniformly positive definite. Hence the positive square root is well-defined, and we set
\begin{align}
U_t:=A_t^{1/2},\qquad V_t:=A_t^{-1/2}:=U_t^{-1}.\notag
\end{align}
From the eigenvalue bounds we obtain
\begin{equation}\label{eq:St_bounds_sec3}
\|U_t\|_{L^\infty}\le \sqrt{\Lambda_2},\quad
\|V_t\|_{L^\infty}\le \frac{1}{\sqrt{\Lambda_1}},
\quad\forall t\in I.
\end{equation}

First, we show that $U_A,V_A\in W^{1,p}(\End(\Sigma_g M))$.
Applying $\nabla^g$ to $U_t^2=A_t$ yields the Sylvester-type equation a.e.
\begin{equation}\label{eq:grad_sqrt_equation_W1p_sec3}
U_t(\nabla^g U_t)+(\nabla^g U_t)U_t=\nabla^g A_t.
\end{equation}
Applying \eqref{eq:sylvester_inverse_bound_sec3} to \eqref{eq:grad_sqrt_equation_W1p_sec3} with $S=T=U_t$ and $m=\sqrt{\Lambda_1}$
gives the estimate $|\nabla^g U_t|\le \frac{1}{2\sqrt{\Lambda_1}}|\nabla^gA_t|$, hence
\begin{equation}\label{eq:grad_St_bound_W1p_sec3}
\|\nabla^g U_t\|_{L^p}\le \frac{1}{2\sqrt{\Lambda_1}}\|\nabla^g A_t\|_{L^p}.
\end{equation}
Differentiating $V_tU_t=\Id$ gives
\begin{align}
\nabla^g V_t=-V_t(\nabla^g U_t)V_t,\notag
\end{align}
and using \eqref{eq:St_bounds_sec3} and \eqref{eq:grad_St_bound_W1p_sec3} we obtain
\begin{equation}\label{eq:grad_Q_bound_W1p_sec3}
\|\nabla^g V_t\|_{L^p}
\le \|V_t\|_{L^\infty}^2\,\|\nabla^g U_t\|_{L^p}
\le \frac{1}{2\Lambda_1^{3/2}}\|\nabla^g A_t\|_{L^p}.
\end{equation}
Thus $U_t,V_t\in W^{1,p}\bigl(M,\End(\Sigma_gM)\bigr)$ and
\begin{align}
\|U_t\|_{\mathcal A}+\|V_t\|_{\mathcal A}\le C(\Lambda_1,\Lambda_2)\bigl(1+\|\nabla^gA_t\|_{L^p}\bigr).\notag
\end{align}

\smallskip
Second, we prove both $U_t$ and $V_t$ are continuous in $t$.
For $s,t\in I$ we have
\begin{align}
U_t^2-U_s^2=A_t-A_s
\quad\Longrightarrow\quad
U_t(U_t-U_s)+(U_t-U_s)U_s=A_t-A_s.\notag
\end{align}
Let $X:=U_t-U_s$. Then $X$ solves $\mathcal L_{U_t,U_s}(X)=A_t-A_s$.
Using \eqref{eq:sylvester_inverse_bound_sec3} with $m=\sqrt{\Lambda_1}$ yields
\begin{equation}\label{eq:St_Linfty_Lipschitz_W1p_sec3}
\|U_t-U_s\|_{L^\infty}\le \frac{1}{2\sqrt{\Lambda_1}}\|A_t-A_s\|_{L^\infty}.
\end{equation}
Evaluating~\eqref{eq:grad_sqrt_equation_W1p_sec3} at $t$ and $s$ and subtracting yields, with
$Y:=\nabla^gU_t-\nabla^gU_s$,
\begin{align}
\mathcal{L}_{U_t, U_s}(Y)
= U_tY+YU_s
= \nabla^g(A_t-A_s)
-(U_t-U_s)\nabla^gU_s-(\nabla^gU_t)(U_t-U_s).\notag
\end{align}
It follows that
\begin{align}\label{eq:grad_St_Lp_est_W1p_sec3}
\|\nabla^gU_t-\nabla^gU_s\|_{L^p}
&\le \frac{1}{2\sqrt{\Lambda_1}}\Bigl(
\|\nabla^g(A_t-A_s)\|_{L^p}
+\|U_t-U_s\|_{L^\infty}\bigl(\|\nabla^gU_t\|_{L^p}+\|\nabla^gU_s\|_{L^p}\bigr)
\Bigr).
\end{align}
Using \eqref{eq:St_Linfty_Lipschitz_W1p_sec3}, \eqref{eq:grad_St_bound_W1p_sec3} and the $\mathcal A$--continuity of $t\mapsto A_t$,
we infer that $t\mapsto U_t$ is continuous in $\mathcal A$.

Moreover,
\begin{align}
V_t-V_s= U_t^{-1}-U_s^{-1}=U_t^{-1}(U_s-U_t)U_s^{-1},\notag
\end{align}
hence by \eqref{eq:St_bounds_sec3} and \eqref{eq:St_Linfty_Lipschitz_W1p_sec3},
\begin{equation}\label{eq:Qt_Linfty_cont_W1p_sec3}
\|V_t-V_s\|_{L^\infty}
\le \frac{1}{2\Lambda_1^{3/2}}\|A_t-A_s\|_{L^\infty}.
\end{equation}
Finally, from $\nabla^gV_t=-V_t(\nabla^gU_t)V_t$ we get
\begin{align}
\nabla^gV_t-\nabla^gV_s
&=
-(V_t-V_s)(\nabla^gU_t)V_t
-V_s(\nabla^gU_t-\nabla^gU_s)V_t
-V_s(\nabla^gU_s)(V_t-V_s).\notag
\end{align}
Using \eqref{eq:grad_St_bound_W1p_sec3}, \eqref{eq:grad_St_Lp_est_W1p_sec3} and
\eqref{eq:Qt_Linfty_cont_W1p_sec3}, we conclude that $t\mapsto V_t$ is continuous in $\mathcal A$.

\smallskip
\noindent\textbf{Step 2.} We prove (1).
Fix $A\in\mathcal P_{\Lambda_1,\Lambda_2}$. For any $\psi,\phi\in L^2(M, \Sigma_gM)$,
\begin{align}
(U_A\psi,U_A\phi)_{L^2}
=\int_M\langle A^{1/2}\psi, A^{1/2}\phi\rangle\,dv_g
=\int_M\langle A\psi,\phi\rangle\,dv_g
=(\psi,\phi)_A,\notag
\end{align}
so $U_A$ is an isometry from $\mathcal H_A$ onto~$L^2(M, \Sigma_gM)$. Since $A^{1/2}$ is invertible fiberwise,
$U_A$ is bijective with inverse $U_A^{-1}\phi=A^{-1/2}\phi$.
Next, for $\psi\in H^1(M, \Sigma_gM)$,
\begin{align}
U_A\,B(A)\,U_A^{-1}\psi
=
A^{1/2}\,A^{-1}\D\,(A^{-1/2}\psi)
=
A^{-1/2}\D\,(A^{-1/2}\psi)
=
\widetilde D(A)\psi,\notag
\end{align}
this verifies the claim.

\smallskip
\noindent\textbf{Step 3.} We prove (2).
Fix $A\in\mathcal P_{\Lambda_1,\Lambda_2}$ and set $V:=A^{-1/2}\in W^{1,p}(M,\End(\Sigma_gM))$.
By Lemma~\ref{lem:H1_multiplier_W1p_sec3}, action by~$V$ and $V^{-1}=A^{1/2}$ is bounded on $H^1(M, \Sigma_gM)$.
The operator $\widetilde D(A)=V\D V=A^{-1/2}\D A^{-1/2}$ is symmetric on $H^1(M, \Sigma_gM)$:
for $\phi,\psi\in H^1(M, \Sigma_gM)$,
\begin{align}
\langle \widetilde D(A)\phi,\psi\rangle_{L^2}
=\langle \D(V\phi),V\psi\rangle_{L^2}
=\langle V\phi,\D(V\psi)\rangle_{L^2}
=\langle \phi, V\D(V\psi)\rangle_{L^2}
=\langle \phi,\widetilde D(A)\psi\rangle_{L^2}.\notag
\end{align}
Let $\varphi\in\Dom(\widetilde D(A)^*)$ and set $\widetilde{\varphi}:=\widetilde D(A)^*\varphi\in L^2(M, \Sigma_gM)$.
Then for all $\psi\in H^1(M, \Sigma_gM)$,
\begin{align}
\langle \widetilde D(A)\psi,\varphi\rangle_{L^2}=\langle \psi,\widetilde{\varphi}\rangle_{L^2}.\notag
\end{align}
Writing $\chi:=V\psi\in H^1(M, \Sigma_gM)$, we obtain
\begin{align}
\langle \D\chi,V\varphi\rangle_{L^2}=\langle \chi,V^{-1}\widetilde{\varphi}\rangle_{L^2},\qquad\forall \chi\in H^1(M, \Sigma_gM).\notag
\end{align}
Therefore, $V\varphi \in \Dom(\D^*) = \Dom(\D) = H^1(M, \Sigma_gM)$, and $\D^*(V\varphi) = V^{-1}\widetilde{\varphi} \in L^2(M, \Sigma_gM)$. Since~$V^{-1}$ is bounded on $H^1(M, \Sigma_gM)$, it follows that $\varphi = V^{-1}(V\varphi) \in H^1(M, \Sigma_gM)$. Consequently,~$\Dom(\widetilde D(A)^*) \subseteq H^1(M, \Sigma_gM) = \Dom(\widetilde D(A))$, hence $\widetilde D(A)$ is self-adjoint. Through the isometric conjugation $\widetilde D(A) = U_A B(A) U_A^{-1}$, we deduce that $B(A)$ is self-adjoint on $\mathcal H_A$. This completes the proof of assertion (2).

\smallskip
\noindent\textbf{Step 4.} We prove (3).
Fix $s,t\in I$ and set $V_s:=A_s^{-1/2}$ and $V_t:=A_t^{-1/2}$.
For $\psi\in H^1(M, \Sigma_gM)$,
\begin{align}
\bigl(\widetilde D(A_t)-\widetilde D(A_s)\bigr)\psi
=
V_t\D(V_t\psi)-V_s\D(V_s\psi)
=
(V_t-V_s)\D(V_t\psi)+V_s\D\bigl((V_t-V_s)\psi\bigr).\notag
\end{align}
Given that the operator $\D:H^1\to L^2$ is bounded and that weighting by $W^{1,p}$ endomorphisms is bounded on both $H^1$ and $L^2$, see Lemma~\ref{lem:H1_multiplier_W1p_sec3}, we obtain
\begin{equation}\label{eq:DtDs_estimate_W1p_sec3}
\|\widetilde D(A_t)-\widetilde D(A_s)\|_{\mathcal B(H^1,L^2)}
\le C\,\|V_t-V_s\|_{\mathcal A}\,\bigl(\|V_t\|_{\mathcal A}+\|V_s\|_{\mathcal A}\bigr),
\end{equation}
for a constant $C=C(M,g,p)$.
By Step~1, $t\mapsto V_t$ is continuous in $\mathcal A$, hence the right-hand side tends to $0$ as $t\to s$.
\end{proof}

Theorem~\ref{thm:isometric_weighted_dirac_realparam} verifies the conditions of
Definition~\ref{def:typeA-family}. After checking the remaining requirements, we obtain the following

\begin{cor}\label{cor:typeA_discrete_family}
The map
\begin{align}
\widetilde D:\mathcal P_{\Lambda_1,\Lambda_2}\longrightarrow\mathcal C\bigl(L^2(M, \Sigma_gM)\bigr),\qquad
A\longmapsto \widetilde D(A):=A^{-1/2}\D\,A^{-1/2},\notag
\end{align}
is a discrete self-adjoint family of type \emph{(A)} with common domain
\begin{align}
\Dom\bigl(\widetilde D(A)\bigr)=H^1(M, \Sigma_gM).\notag
\end{align}
\end{cor}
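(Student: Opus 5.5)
The plan is to check the four conditions of Definition~\ref{def:typeA-family} one at a time, with $\mathcal E=\mathcal P_{\Lambda_1,\Lambda_2}$ carrying the $\|\cdot\|_{\mathcal A}$-topology, $\mathcal H=L^2(M,\Sigma_gM)$ and $\mathcal Z=H^1(M,\Sigma_gM)$. Then I will verify discreteness separately.

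\textbf{Conditions (1), (2) and (4).} Condition (1) holds by construction: the domain is $H^1$ for every $A$, and $H^1$ is dense in $L^2$. Condition (2) is Theorem~\ref{thm:isometric_weighted_dirac_realparam}(2). Condition (4) is the final assertion of Theorem~\ref{thm:isometric_weighted_dirac_realparam}. The estimate \eqref{eq:DtDs_estimate_W1p_sec3} is stated for $s,t$ along a path, but it only involves $V_A=A^{-1/2}$ and $V_{A'}$. So I will apply Step~1 directly to two weights $A,A'\in\mathcal P_{\Lambda_1,\Lambda_2}$. This gives $\|V_A-V_{A'}\|_{\mathcal A}\to0$ as $\|A-A'\|_{\mathcal A}\to0$, with $\|V_A\|_{\mathcal A}$ locally bounded, and operator-norm continuity into $\mathcal B(H^1,L^2)$ follows.

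\textbf{Condition (3).} I take $|\cdot|=\|\cdot\|_{H^1}$. Boundedness of $\widetilde D(A):H^1\to L^2$ follows from three facts: $\D:H^1\to L^2$ is bounded, multiplication by $V_A$ is bounded on $H^1$ by Lemma~\ref{lem:H1_multiplier_W1p_sec3}, and multiplication by $V_A$ is bounded on $L^2$ by \eqref{eq:St_bounds_sec3}. Together these give $\|\widetilde D(A)\psi\|_{L^2}+\|\psi\|_{L^2}\le C\|\psi\|_{H^1}$.

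For the reverse inequality I will use the standard elliptic estimate $\|\chi\|_{H^1}\le C(\|\D\chi\|_{L^2}+\|\chi\|_{L^2})$ for the Dirac operator on a closed manifold. Take $\chi=V_A\psi$. Then $\D\chi=V_A^{-1}\widetilde D(A)\psi$ and $\|V_A^{-1}\|_{L^\infty}\le\sqrt{\Lambda_2}$, so
\begin{align}
\|V_A\psi\|_{H^1}\le C\bigl(\sqrt{\Lambda_2}\,\|\widetilde D(A)\psi\|_{L^2}+\Lambda_1^{-1/2}\|\psi\|_{L^2}\bigr).
\end{align}
Next, $\psi=A^{1/2}(V_A\psi)$, and $A^{1/2}=U_A$ lies in $W^{1,p}$ by Step~1. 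Applying Lemma~\ref{lem:H1_multiplier_W1p_sec3} once more gives $\|\psi\|_{H^1}\le C\|U_A\|_{\mathcal A}\|V_A\psi\|_{H^1}$. Combining the two bounds shows the graph norm is equivalent to the $H^1$ norm. The constants depend on $A$, which Definition~\ref{def:typeA-family} allows.

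\textbf{Discreteness.} The graph-norm equivalence makes the resolvent $(\widetilde D(A)-i)^{-1}$ bounded from $L^2$ into $H^1$. By Rellich, the inclusion $H^1\hookrightarrow L^2$ is compact, so the resolvent is compact as a map $L^2\to L^2$.

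The only step needing real care is the reverse graph-norm inequality. It depends on the $H^1$-invertibility of multiplication by $A^{\pm1/2}$, and that in turn relies on the $W^{1,p}$ control of $U_A$ and $V_A$ from Step~1. Once that is in place, the remaining conditions are routine.
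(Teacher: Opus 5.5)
Your proposal is correct and follows essentially the same route as the paper. You use the $H^1$-norm as reference norm, the Dirac elliptic estimate applied to $A^{-1/2}\psi$ together with the $H^1$-multiplier lemma for $A^{1/2}$ to get the reverse graph-norm bound, and boundedness of $(\widetilde D(A)-i)^{-1}\colon L^2\to H^1$ plus Rellich for discreteness; your observation that the Step~1 estimates apply directly to any pair of weights $A,A'$ is only a slight tightening of the paper's appeal to the ``in particular'' clause of Theorem~\ref{thm:isometric_weighted_dirac_realparam}.
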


\begin{proof}
Fix $A\in\mathcal P_{\Lambda_1,\Lambda_2}$ and set $V:=A^{-1/2}\in W^{1,p}(M,\End(\Sigma_gM))$.
By Theorem~\ref{thm:isometric_weighted_dirac_realparam}, $\widetilde D(A)$ is self-adjoint on $L^2(M, \Sigma_gM)$ with
$\Dom(\widetilde D(A))=H^1(M, \Sigma_gM)$ and the map
\begin{align}
A\longmapsto \widetilde D(A)\in\mathcal B\bigl(H^1(M, \Sigma_gM),L^2(M, \Sigma_gM)\bigr)\notag
\end{align}
is continuous in $\|\cdot\|_{\mathcal A}$. It remains to verify the graph-norm condition.

We equip $H^1(M, \Sigma_gM)$ with the standard norm $\|\cdot\|_{H^1}$, since the action of a spinor bundle endomorphism with $W^{1,p}$ regularity on $H^1$ is bounded and $\D:H^1(M, \Sigma_gM)\to L^2(M, \Sigma_gM)$ is bounded, we have $\widetilde D(A)\in\mathcal B(H^1,L^2)$. Hence, for any $\varphi \in H^1(M, \Sigma_gM)$,
\begin{align}
\|\varphi\|_{L^2}+\|\widetilde D(A)\varphi\|_{L^2}\le C\|\varphi\|_{H^1}.\notag
\end{align}
For the converse inequality, we use the standard elliptic estimate for the Dirac operator:
there exists~$C>0$ such that for all $\psi\in H^1(M, \Sigma_gM)$,
\begin{equation}\label{eq:Dirac_elliptic_est_sec3}
\|\psi\|_{H^1}\le C\bigl(\|\psi\|_{L^2}+\|\D\psi\|_{L^2}\bigr).
\end{equation}
For any $\varphi\in H^1(M, \Sigma_gM)$, set $\psi:=V\varphi\in H^1(M, \Sigma_gM)$. Then
\begin{align}
\D\psi=\D(V\varphi)=V^{-1}\widetilde D(A)\varphi,\notag
\end{align}
hence by \eqref{eq:Dirac_elliptic_est_sec3} and \eqref{eq:St_bounds_sec3},
\begin{align}
\|\psi\|_{H^1}\le C\bigl(\|\varphi\|_{L^2}+\|\widetilde D(A)\varphi\|_{L^2}\bigr).\notag
\end{align}
Since $\varphi=V^{-1}\psi$ and $V^{-1}=A^{1/2}\in W^{1,p}(M,\End(\Sigma_gM))$, Lemma~\ref{lem:H1_multiplier_W1p_sec3} gives
\begin{align}
\|\varphi\|_{H^1}\le C'\|\psi\|_{H^1}
\le C''\bigl(\|\varphi\|_{L^2}+\|\widetilde D(A)\varphi\|_{L^2}\bigr).\notag
\end{align}
Thus the graph norm of $\widetilde D(A)$ is equivalent to $\|\cdot\|_{H^1}$ on $H^1(M, \Sigma_gM)$.

Finally, we show that $\widetilde{D}(A)$ has a compact resolvent.

For any $\varphi \in L^2\left(M, \Sigma_g M\right)$, let $\psi:= (\widetilde{D}(A)-i)^{-1} \varphi$, where~$i\in \C$ is the imaginary unit.
Since $\widetilde{D}(A)$ is self-adjoint, taking the imaginary part of $\langle(\widetilde{D}(A)- i) \psi, \psi\rangle_{L^2}=\langle\varphi, \psi\rangle_{L^2}$ yields
\begin{align}
\|\psi\|_{L^2}^2 \leq\|\varphi\|_{L^2}\|\psi\|_{L^2} \quad \Longrightarrow \quad\|\psi\|_{L^2} \leq\|\varphi\|_{L^2}.
\end{align}
Furthermore, the identity $\widetilde{D}(A) \psi=\varphi+i \psi$ implies that $\|\widetilde{D}(A) \psi\|_{L^2} \leq\|\varphi\|_{L^2}+ \|\psi\|_{L^2} \leq 2\|\varphi\|_{L^2}$. Combining this with the graph-norm equivalence established above, there exists a constant $C>0$ such that
\begin{align}
\|\psi\|_{H^1} \leq C\left(\|\psi\|_{L^2}+\|\widetilde{D}(A) \psi\|_{L^2}\right) \leq 3 C\|\varphi\|_{L^2}.
\end{align}
This proves that the resolvent $(\widetilde{D}(A)-i)^{-1}: L^2 \rightarrow H^1$ is a bounded linear operator. By the Rellich-Kondrachov theorem on the compact manifold $M$, the embedding $\iota$ : $H^1\left(M, \Sigma_g M\right) \hookrightarrow L^2\left(M, \Sigma_g M\right)$ is compact. Consequently, the composition
\begin{align}
(\widetilde{D}(A)-i)^{-1}=\iota \circ(\widetilde{D}(A)-i)^{-1}: L^2 \longrightarrow L^2
\end{align}
is a compact operator. Thus, each $\widetilde{D}(A)$ has a compact resolvent, and the family is discrete.
\end{proof}

Building on the preceding discussion, we can establish the following theorem, showing that the full spectrum depends continuously with respect to the $\arsinh$-metric on the weight in the $W^{1,p}$ topology. Theorem \ref{thm: C0_CONT} follows as a consequence.
\begin{thm}\label{thm: C0 main thm}
The map
\begin{align}
\mathfrak s:\bigl(\mathcal P_{\Lambda_1,\Lambda_2},\|\cdot\|_{\mathcal A}\bigr)\longrightarrow (\mathfrak{Mon},d_a),
\qquad
A\longmapsto \mathfrak s^{A},\notag
\end{align}
is continuous.
\end{thm}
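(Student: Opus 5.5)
We combine Corollary~\ref{cor:typeA_discrete_family} with Theorem~\ref{thm: key expanding}. The remaining task is to show that the shift $k$ produced there can be taken to be $0$ for the normalized enumeration of Definition~\ref{def:ordered_spectrum_counting}. Fix $A_0\in\mathcal P_{\Lambda_1,\Lambda_2}$ and $\varepsilon>0$. By Corollary~\ref{cor:typeA_discrete_family}, $A\mapsto\widetilde D(A)$ is a discrete self-adjoint family of type (A) on $L^2(M,\Sigma_gM)$ with common domain $H^1$. Theorem~\ref{thm: key expanding} then gives a $\|\cdot\|_{\mathcal A}$-neighborhood $U$ of $A_0$ such that for each $A\in U$ there is $k=k(A)\in\bZ$ with
\begin{align}
\sup_{j\in\bZ} d_a\bigl(\mathfrak s^{A_0}(j),\mathfrak s^{A}(j+k)\bigr)<\varepsilon .
\end{align}
We may shrink $\varepsilon$ at will, so it suffices to prove $k(A)=0$ once $\varepsilon$ is small.

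The key observation is that the kernel is independent of the weight:
\begin{align}
\Ker\widetilde D(A)=A^{1/2}\Ker\D ,
\qquad
\dim\Ker\widetilde D(A)=\dim\Ker\D=:m_0
\quad\text{for all } A.
\end{align}
To see this, note that $A^{-1/2}\D A^{-1/2}\varphi=0$ iff $\D(A^{-1/2}\varphi)=0$. Here $A^{\pm1/2}$ are bounded and invertible on $H^1$ by Step~1 of Theorem~\ref{thm:isometric_weighted_dirac_realparam} and Lemma~\ref{lem:H1_multiplier_W1p_sec3}. By the normalization (3) in Definition~\ref{def:ordered_spectrum_counting}, the zero eigenvalue occupies exactly the indices $0,\dots,m_0-1$ for every $A$. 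Thus $\mathfrak s^{A}(j)=0$ for $0\le j\le m_0-1$, $\mathfrak s^A(-1)<0$, and $\mathfrak s^A(m_0)>0$.

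Let $\delta:=\min\{|\arsinh\mathfrak s^{A_0}(-1)|,\arsinh\mathfrak s^{A_0}(m_0)\}>0$, the gap of the spectrum of $\widetilde D(A_0)$ around $0$, and take $\varepsilon<\delta/2$. Then for $A\in U$:
\begin{enumerate}
\item[(i)] For $0\le j\le m_0-1$ we have $|\arsinh\mathfrak s^A(j+k)|<\varepsilon$. So $\widetilde D(A)$ has at least $m_0$ eigenvalues, counted with multiplicity, in $(-\sinh\varepsilon,\sinh\varepsilon)$.
\item[(ii)] The indices $j\le-1$ are sent to values $\le-\sinh(\delta-\varepsilon)<0$, and $j\ge m_0$ to values $\ge\sinh(\delta-\varepsilon)>0$.
\end{enumerate}
By monotonicity, $\mathfrak s^A(k-1)<0$ and $\mathfrak s^A(k+m_0)>0$. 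If $m_0\ge1$, the indices $k,\dots,k+m_0-1$ are then exactly the $m_0$ eigenvalues of $\widetilde D(A)$ in the small window. These must be the zero eigenvalues, which sit at $0,\dots,m_0-1$, so $k=0$. If $m_0=0$, we have $\mathfrak s^A(k-1)<0<\mathfrak s^A(k)$ and $\mathfrak s^A(-1)<0\le\mathfrak s^A(0)$, which forces $k=0$.

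In the case $m_0\ge1$ one must rule out small nonzero eigenvalues of $A$ filling the window instead of the kernel. This cannot happen: the total count in the window is exactly $m_0$, because all other indices are pushed out by (ii), and the kernel already supplies $m_0$ eigenvalues there. Hence $k=0$ and $d_a(\mathfrak s^{A_0},\mathfrak s^A)<\varepsilon$ on $U$, which proves continuity. Theorem~\ref{thm: C0_CONT} follows by composing with a continuous path $t\mapsto A_t$.

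The main obstacle I expect is this index-stabilization step: carefully matching the window count with the invariant kernel dimension, and checking that the $\arsinh$-closeness is fine enough near $0$. Near $0$ the function $\arsinh$ is bi-Lipschitz, so the latter should be unproblematic.
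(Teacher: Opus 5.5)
Your proposal is correct and follows essentially the same route as the paper: you apply Theorem~\ref{thm: key expanding} to the type-(A) family from Corollary~\ref{cor:typeA_discrete_family}, then use $\Ker\widetilde D(A)=A^{1/2}\Ker\D$ together with the gap of $\widetilde D(A_0)$ at the indices $-1$ and $m_0$ to force $k(A)=0$. The paper phrases the last step as a direct contradiction (test $j=-1$ if $k\ge 1$ and $j=m_0$ if $k\le -1$). Your inequalities $\mathfrak s^A(k-1)<0<\mathfrak s^A(k+m_0)$, compared with $\mathfrak s^A(-1)<0<\mathfrak s^A(m_0)$, already give $k\le 0$ and $k\ge 0$, so the window-counting detour is not needed.
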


\begin{proof}
Fix $A_0\in\mathcal P_{\Lambda_1,\Lambda_2}$ and set
\begin{align}
\widetilde D(A):=A^{-1/2}\D\,A^{-1/2},
\qquad
\Dom\bigl(\widetilde D(A)\bigr)=H^1(M, \Sigma_gM).\notag
\end{align}
By Corollary~\ref{cor:typeA_discrete_family}, the map $A\mapsto \widetilde D(A)$ is a discrete self-adjoint
family of type (A) on the Hilbert space~$L^2(M, \Sigma_gM)$.

We utilize Theorem~\ref{thm: key expanding} on this family at $A_0$.
Given $\varepsilon>0$, there exists a $\|\cdot\|_{\mathcal A}$-neighborhood $U$ of~$A_0$ in
$\mathcal P_{\Lambda_1,\Lambda_2}$ such that for each $A\in U$ there is an integer~$k(A)\in\bZ$ with
\begin{equation}\label{eq:shifted_da_est_W1p_sec3}
\forall j\in\bZ:\quad d_a\!\bigl(\mathfrak s^{A_0}(j),\,\mathfrak s^{A}(j+k(A))\bigr)<\varepsilon .
\end{equation}
We show that one can take $k(A)=0$ for each $A\in U$.

To this end we first note that for $\psi\in H^1(M, \Sigma_gM)$,
\begin{align}
\widetilde D(A)\psi=0
\Longleftrightarrow
\D(A^{-1/2}\psi)=0
\Longleftrightarrow
A^{-1/2}\psi\in \ker(\D),\notag
\end{align}
so
\begin{align}
\ker\bigl(\widetilde D(A)\bigr)=A^{1/2}\ker(\D),
\qquad
\dim \ker\bigl(\widetilde D(A)\bigr)=\dim\ker(\D),
\quad \forall A\in\mathcal P_{\Lambda_1,\Lambda_2}.\notag
\end{align}
In particular, the multiplicity of the eigenvalue $0$ is constant on $\mathcal P_{\Lambda_1,\Lambda_2}$.

Let $m:=\dim\ker(\D)$, by the indexing convention, $0$ occurs with multiplicity $m$ and
\begin{align}
\mathfrak s^{A}(0)=\cdots=\mathfrak s^{A}(m-1)=0,\quad
\mathfrak s^{A}(-1)<0,\quad \mathfrak s^{A}(m)>0,
\quad \forall A\in\mathcal P_{\Lambda_1,\Lambda_2}.\notag
\end{align}

Next we choose $\varepsilon_\ast>0$ such that
\begin{align}
0<\varepsilon_\ast<\frac12\min\Bigl\{\,\arsinh\bigl(-\mathfrak s^{A_0}(-1)\bigr),\ \arsinh\bigl(\mathfrak s^{A_0}(m)\bigr)\Bigr\}.
\notag
\end{align}
Set $\varepsilon_0:=\min\{\varepsilon,\varepsilon_\ast\}$ and shrink $U$ so that
\eqref{eq:shifted_da_est_W1p_sec3} holds with $\varepsilon_0$ in place of $\varepsilon$.

We now claim that $k(A)=0$ for all $A\in U$.
Assume by contradiction that $k(A)\neq 0$ for some $A\in U$.

If $k(A)\ge 1$, then taking $j=-1$ in \eqref{eq:shifted_da_est_W1p_sec3} gives
\begin{align}
d_a\!\bigl(\mathfrak s^{A_0}(-1),\,\mathfrak s^{A}(-1+k(A))\bigr)<\varepsilon_0.\notag
\end{align}
Since $-1+k(A)\ge 0$ and $\mathfrak s^{A}$ is nondecreasing, we have
\begin{align}
\mathfrak s^{A}(-1+k(A))\ge \mathfrak s^{A}(0)=0.\notag
\end{align}
On the other hand, $\mathfrak s^{A_0}(-1)<0$, hence
\begin{align}
d_a\!\bigl(\mathfrak s^{A_0}(-1),\,\mathfrak s^{A}(-1+k(A))\bigr)
&\ge \arsinh\!\bigl(-\mathfrak s^{A_0}(-1)\bigr)
\ge 2\varepsilon_\ast\ge 2\varepsilon_0,\notag
\end{align}
a contradiction.

If $k(A)\le -1$, then taking $j=m$ in \eqref{eq:shifted_da_est_W1p_sec3} gives
\begin{align}
d_a\!\bigl(\mathfrak s^{A_0}(m),\,\mathfrak s^{A}(m+k(A))\bigr)<\varepsilon_0.\notag
\end{align}
Since $m+k(A)\le m-1$ and $\mathfrak s^{A}$ is nondecreasing, we have
\begin{align}
\mathfrak s^{A}(m+k(A))\le \mathfrak s^{A}(m-1)=0.\notag
\end{align}
On the other hand, $\mathfrak s^{A_0}(m)>0$, hence
\begin{align}
d_a\!\bigl(\mathfrak s^{A_0}(m),\,\mathfrak s^{A}(m+k(A))\bigr)
&\ge \arsinh\!\bigl(\mathfrak s^{A_0}(m)\bigr)
\ge 2\varepsilon_\ast\ge 2\varepsilon_0,\notag
\end{align}
a contradiction.

Thus $k(A)=0$ for all $A\in U$.
Consequently, we have
\begin{align}
d_a\!\bigl(\mathfrak s^{A_0}(j),\,\mathfrak s^{A}(j)\bigr)<\varepsilon, \qquad\forall A\in U.\notag
\end{align}
This proves continuity of $\mathfrak s$ at $A_0$ in $(\mathfrak{Mon},d_a)$.
\end{proof}

\section[C1 weight]{\texorpdfstring{The Lipschitz Continuity of Weighted Spectra with $C^1$ Weight}{The Lipschitz Continuity of Weighted Spectra with C1 Weight}}\label{sec:C1weight}

In this section, we consider a $C^1$ family $(A_t)_{t\in I}\subset\mathcal P_{\Lambda_1,\Lambda_2}$ on an interval $I\subset\mathbb R$.
Let $\mathfrak s_t\in\mathfrak{Mon}$ be the spectral tuple associated with the weighted eigenvalue problem
$\D\psi=\lambda A_t\psi$.
The goal of this section is to prove Theorem~\ref{thm:C1_CONT}, namely the local Lipschitz continuity
for $t\mapsto\mathfrak s_t$ in the $\arsinh$-metric.

A convenient way to study the weighted problem is to pass to the conjugated family
\begin{align}
\widetilde{D}_t:=A_t^{-1/2}\D A_t^{-1/2},
\end{align}
which has the same eigenvalues as the equation~$\D\psi=\lambda A_t\psi$.
We therefore start with the~$C^1$-regularity of the~$A_t^{\pm1/2}$.
\begin{lemma}\label{lemma:sqrt-C1}
Let $p > n$ and assume \eqref{hyp:H1}, i.e.\ the family
$t \mapsto A_t \in \mathcal{P}^{p}_{\Lambda_1,\Lambda_2}$
is of class $C^1$ with respect to the $W^{1,p}$-topology on
$W^{1,p}(M, \End(\Sigma_g M))$.
Set $S_t := A_t^{1/2}$ and $Q_t := A_t^{-1/2}$.
Then the maps
\begin{align}
    t \longmapsto S_t
    \qquad\text{and}\qquad
    t \longmapsto Q_t
\end{align}
are both of class $C^1$ as maps
$I \to W^{1,p}(M, \End(\Sigma_g M))$.
\end{lemma}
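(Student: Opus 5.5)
The plan is to identify the candidate derivative $\dot S_t$ through the Sylvester equation obtained by formally differentiating $S_t^2=A_t$. Then I show the difference quotients converge to it in $W^{1,p}$, and check that it depends continuously on $t$. Concretely, for each $t$ let $X_t$ be the unique fiberwise solution of
\begin{align}
S_tX_t+X_tS_t=\dot A_t ,
\end{align}
that is, $X_t=\mathcal L_{S_t,S_t}^{-1}(\dot A_t)$, given by \eqref{eq:sylvester_inverse_formula_sec3}. By \eqref{eq:sylvester_inverse_bound_sec3}, $|X_t|\le \frac{1}{2\sqrt{\Lambda_1}}|\dot A_t|$ pointwise.

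To see that $X_t\in W^{1,p}$, apply $\nabla^g$ to the defining equation:
\begin{align}
S_t(\nabla^gX_t)+(\nabla^gX_t)S_t=\nabla^g\dot A_t-(\nabla^gS_t)X_t-X_t(\nabla^gS_t).
\end{align}
Inverting $\mathcal L_{S_t,S_t}$ and using $\|X_t\|_{L^\infty}\le C\|\dot A_t\|_{\mathcal A}$ (Sobolev) together with \eqref{eq:grad_St_bound_W1p_sec3} gives $\|X_t\|_{\mathcal A}\le C(\Lambda_1,\Lambda_2)\bigl(1+\|A_t\|_{\mathcal A}\bigr)\|\dot A_t\|_{\mathcal A}$. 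This step can be made rigorous by differentiating the integral formula under the integral sign, or by working with difference quotients in space.

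Next, for $h\neq0$ put $R_h:=h^{-1}(S_{t+h}-S_t)-X_t$. Subtracting the Sylvester identities gives
\begin{align}
S_{t+h}R_h+R_hS_t=\Bigl(\tfrac{A_{t+h}-A_t}{h}-\dot A_t\Bigr)-(S_{t+h}-S_t)X_t ,
\end{align}
which is an equation of the form $\mathcal L_{S_{t+h},S_t}(R_h)=Y_h$. The $L^\infty$ part of $R_h$ is handled directly by \eqref{eq:sylvester_inverse_bound_sec3}. For the $L^p$ norm of $\nabla^gR_h$, differentiate this equation exactly as in the proof of \eqref{eq:grad_St_Lp_est_W1p_sec3}: the gradient of $R_h$ solves an $\mathcal L_{S_{t+h},S_t}$-equation whose right-hand side consists of $\nabla^gY_h$ plus terms $(\nabla^gS_{t+h})R_h$ and $R_h\nabla^gS_t$. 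This yields
\begin{align}
\|R_h\|_{\mathcal A}\le C\Bigl(\|Y_h\|_{\mathcal A}+\|R_h\|_{L^\infty}\bigl(\|S_{t+h}\|_{\mathcal A}+\|S_t\|_{\mathcal A}\bigr)\Bigr).
\end{align}
Since $W^{1,p}$ is a Banach algebra for $p>n$, we have $\|Y_h\|_{\mathcal A}\le \|h^{-1}(A_{t+h}-A_t)-\dot A_t\|_{\mathcal A}+C\|S_{t+h}-S_t\|_{\mathcal A}\|X_t\|_{\mathcal A}$. This tends to $0$ by \eqref{hyp:H1} and the $\mathcal A$-continuity of $t\mapsto S_t$ from Step~1 of Theorem~\ref{thm:isometric_weighted_dirac_realparam}. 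Hence $\dot S_t=X_t$ in $\mathcal A$.

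Continuity of $t\mapsto X_t$ in $\mathcal A$ follows the same pattern. The difference $X_t-X_s$ solves
\begin{align}
\mathcal L_{S_t,S_s}(X_t-X_s)=\dot A_t-\dot A_s-(S_t-S_s)X_s-X_t(S_t-S_s),
\end{align}
and I again estimate the solution and its gradient, using continuity of $\dot A_t$, $S_t$ and the local uniform bounds on $\|X_t\|_{\mathcal A}$. For $Q_t=S_t^{-1}$, the derivative is $\dot Q_t=-Q_t\dot S_tQ_t$: write $h^{-1}(Q_{t+h}-Q_t)=-Q_{t+h}\,h^{-1}(S_{t+h}-S_t)\,Q_t$ and use the algebra property of $W^{1,p}$ with the $\mathcal A$-continuity of $Q_t$ (Step~1). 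Continuity of $-Q_t\dot S_tQ_t$ is then immediate, again by the algebra property.

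I expect the main obstacle to be the gradient estimate for solutions of $\mathcal L_{S,T}(X)=Y$ when $S,T$ are only $W^{1,p}$, i.e.\ justifying $\nabla^g\mathcal L^{-1}_{S,T}(Y)\in L^p$ with the stated bound. I would settle this once as a lemma by differentiating $e^{-sS}$ under the integral sign, via Duhamel's formula $\nabla^g e^{-sS}=-\int_0^s e^{-(s-r)S}(\nabla^gS)e^{-rS}\,dr$. Each term is then bounded by $s\,e^{-sm}|\nabla^gS|$, which is integrable in $s$. After that, everything above is algebra plus \eqref{eq:sylvester_inverse_bound_sec3}.
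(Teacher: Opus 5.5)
Your proposal is correct and follows essentially the same route as the paper. Both identify the candidate derivative from $\mathcal L_{S_t,S_t}(\dot S_t)=\dot A_t$, estimate the remainder $R_h$ through an $\mathcal L_{S_{t+h},S_t}$-equation and its gradient in $L^p$, and obtain $\dot Q_t=-Q_t\dot S_tQ_t$ via the Banach algebra property. Your version is slightly more careful in three places: your Duhamel lemma justifies applying $\nabla^g$ to the Sylvester equation, which the paper does without first showing $\dot S_t$ is weakly differentiable; you get continuity of $t\mapsto\dot S_t$ from a Sylvester equation for $X_t-X_s$ rather than from the integral representation; and you handle $Q_t$ by a difference quotient instead of formally differentiating $Q_tS_t=\Id$.
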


\begin{proof}
We write $\nabla^{g}$ for the covariant derivative on~$\End(\Sigma_gM)$
induced by the spin connection on~$\Sigma_gM$.

Recall that for fiberwise self-adjoint endomorphisms $S, T \geq m\,\Id$
with $m > 0$, the Sylvester operator
$\mathcal{L}_{S,T}(X) := SX + XT$
is invertible with unique solution
$\mathcal{L}_{S,T}^{-1}(Y) = \int_0^\infty e^{-\tau S}Ye^{-\tau T}\,\dd\tau$
and operator-norm bound
$\|\mathcal{L}_{S,T}^{-1}(Y)\|_{\mathcal B} \leq \frac{1}{2m}\|Y\|_{\mathcal B}$.

\smallskip
We begin by identifying the candidate derivative.
Differentiating the identity $S_t^2 = A_t$ formally gives the Sylvester equation
\begin{equation}\label{eq:sylvester-dot}
    \mathcal{L}_{S_t,S_t}(\dot S_t)
    = S_t\dot S_t + \dot S_t S_t = \dot A_t,
\end{equation}
whose unique solution is the integral formula in Lemma~\ref{lem: Sylvester solution},
with $L^\infty$-bound
$\|\dot S_t\|_{L^\infty}
\leq \frac{1}{2\sqrt{\Lambda_1}}\|\dot A_t\|_{L^\infty}$.
Applying $\nabla^{g}$ to~\eqref{eq:sylvester-dot} yields
\begin{align}
    \mathcal{L}_{S_t,S_t}(\nabla^{g}\dot S_t)
    = \nabla^{g}\dot A_t
      - (\nabla^{g} S_t)\dot S_t
      - \dot S_t(\nabla^{g} S_t).
\end{align}
The right-hand side lies in $L^p$: the first term by \eqref{hyp:H1}, and the
remaining terms since $\nabla^{g} S_t \in L^p$ (by estimate~\eqref{eq:grad_St_bound_W1p_sec3}) and
$\dot S_t \in L^\infty$.
Hence $\dot S_t \in W^{1,p}(M, \End(\Sigma_g M))$.

\smallskip
It remains to verify that $\dot S_t$ is indeed the $W^{1,p}$-derivative of $S_t$.
Set $D_h := \frac{S_{t+h}-S_t}{h}$.
From $S_{t+h}^2 - S_t^2 = A_{t+h} - A_t$ one reads off
\begin{equation}\label{eq:Dh-sylvester}
    \mathcal{L}_{S_{t+h},S_t}(D_h) = \frac{A_{t+h}-A_t}{h},
\end{equation}
so the remainder $R_h := D_h - \dot S_t$ satisfies
\begin{equation}\label{eq:Rh}
    \mathcal{L}_{S_{t+h},S_t}(R_h) = E_h + F_h,
    \qquad
    E_h := \frac{A_{t+h}-A_t}{h} - \dot A_t,
    \quad
    F_h := (S_t - S_{t+h})\dot S_t.
\end{equation}
The $L^\infty$-bound on $\mathcal{L}_{S_{t+h},S_t}^{-1}$, combined with~$\|E_h\|_{L^\infty} \to 0$, since $W^{1,p} \hookrightarrow L^\infty$ and~$\frac{A_{t+h}-A_t}{h} \to \dot A_t$ in~$W^{1,p}$ by \eqref{hyp:H1} and
\begin{align}
    \|F_h\|_{L^\infty}
    \leq \|S_t - S_{t+h}\|_{L^\infty}\|\dot S_t\|_{L^\infty}
    \leq \frac{\|A_t - A_{t+h}\|_{L^\infty}}{2\sqrt{\Lambda_1}}
         \|\dot S_t\|_{L^\infty}
    \longrightarrow 0.
\end{align}
We have
\begin{equation}\label{eq:Rh-Linfty}
    \|R_h\|_{L^\infty} \longrightarrow 0 \quad\text{as } h \to 0.
\end{equation}

To upgrade to $W^{1,p}$-convergence, we apply $\nabla^{g}$ to~\eqref{eq:Rh}.
The Leibniz rule gives
\begin{equation}\label{eq:nabla-Rh}
    \mathcal{L}_{S_{t+h},S_t}(\nabla^{g} R_h)
    = \nabla^{g} E_h + \nabla^{g} F_h
      - (\nabla^{g} S_{t+h})R_h - R_h(\nabla^{g} S_t),
\end{equation}
and the $L^p$-bound on $\mathcal{L}_{S_{t+h},S_t}^{-1}$ reduces the
claim $\|\nabla^{g} R_h\|_{L^p} \to 0$ to showing that each term on the
right of~\eqref{eq:nabla-Rh} tends to zero in $L^p$.

The term $\|\nabla^{g} E_h\|_{L^p} \to 0$ is immediate from hypothesis \eqref{hyp:H1}.
For $\nabla^{g} F_h$, the Leibniz rule gives
\begin{align}
    \nabla^{g} F_h
    = (\nabla^{g} S_t - \nabla^{g} S_{t+h})\dot S_t
      + (S_t - S_{t+h})\,\nabla^{g}\dot S_t;
\end{align}
the first term tends to zero in $L^p$ because
$\|\nabla^{g} S_t - \nabla^{g} S_{t+h}\|_{L^p} \to 0$, continuity of $t \mapsto S_t$ in $W^{1,p}$, estimate~\eqref{eq:grad_St_Lp_est_W1p_sec3}
and $\dot S_t \in L^\infty$;
the second because $\|S_t - S_{t+h}\|_{L^\infty} \to 0$
while $\nabla^{g}\dot S_t \in L^p$.
For the last two terms in~\eqref{eq:nabla-Rh}, we note that
estimate~\eqref{eq:St_Linfty_Lipschitz_W1p_sec3} gives a uniform bound
$\sup_{|h| \leq 1}\|\nabla^{g} S_{t+h}\|_{L^p} < \infty$,
so
\begin{align}
    \|(\nabla^{g} S_{t+h})R_h\|_{L^p}
    \leq \|\nabla^{g} S_{t+h}\|_{L^p}\,\|R_h\|_{L^\infty}
    \longrightarrow 0,
\end{align}
and the symmetric estimate $\|R_h(\nabla^{g} S_t)\|_{L^p}
\leq \|\nabla^{g} S_t\|_{L^p}\|R_h\|_{L^\infty} \to 0$
follows from~\eqref{eq:Rh-Linfty}.
Therefore~$\|R_h\|_{W^{1,p}} \to 0$, confirming
$\frac{S_{t+h}-S_t}{h} \to \dot S_t$ in $W^{1,p}$.

\smallskip
Continuity of $t \mapsto \dot S_t$ in $W^{1,p}$ follows from the
integral representation.
Decomposing the following equation
\begin{align}
    \dot S_t - \dot S_s
    = \int_0^\infty
      \bigl(
          e^{-\tau S_t}\dot A_t e^{-\tau S_t}
         -e^{-\tau S_s}\dot A_s e^{-\tau S_s}
      \bigr)\dd\tau
\end{align}
into three parts, one involving~$\dot A_t - \dot A_s$, and two involving~$e^{-\tau S_t}-e^{-\tau S_s}$—each
tends to zero in~$W^{1,p}$: the first by \eqref{hyp:H1}, the others by continuity
of~$t \mapsto S_t$ in~$W^{1,p}$ and smoothness of the matrix exponential.
Since every term is dominated by~$e^{-2\tau\sqrt{\Lambda_1}}$, yields~$\|\dot S_t - \dot S_s\|_{W^{1,p}} \to 0$,
so~$t \mapsto S_t$ is~$C^1$.

\smallskip
Finally, the formula $\dot Q_t = -Q_t\dot S_t Q_t$, obtained by
differentiating $Q_tS_t = \Id$, shows that $\dot Q_t \in W^{1,p}$
since $W^{1,p}(M, \End(\Sigma_g M))$ is a Banach algebra for $p > n$.
Continuity of $t \mapsto \dot Q_t$ in $W^{1,p}$ follows from
that of $t \mapsto Q_t$ and $t \mapsto \dot S_t$,
completing the proof that $t \mapsto Q_t = A_t^{-1/2}$ is $C^1$.
\end{proof}

Once the~$C^1$-dependence of~$A_t^{\pm1/2}$ is available, the conjugated operators~$\widetilde{D}_t$ form a $C^1$ family from~$H^1(M,\Sigma_gM)$ to~$L^2(M,\Sigma_gM)$. This provides the analytic input needed to study eigenvalues through their projections and to construct corresponding~$C^1$ local frames.
\begin{thm}\label{thm:multiple_cluster_projector}
Assume \eqref{hyp:H1}.
Fix $t_0\in I$, and let $\lambda_0\in\operatorname{Spect}(\widetilde{D}_{t_0})$
be an eigenvalue of multiplicity~$m$.
Choose a smooth positively oriented Jordan curve $\Gamma\subset\C$
which encloses $\lambda_0$ and no other points of~$\operatorname{Spect}(\widetilde{D}_{t_0})$.
For $t$ near $t_0$, define the projection
\begin{align}
P_t:=\frac{1}{2\pi i}\oint_{\Gamma}(z-\widetilde{D}_t)^{-1}\,\dd z.
\end{align}
Then, after shrinking $I$ around $t_0$ if necessary, the following hold.
\begin{enumerate}
\item The map $t\mapsto P_t$ is of class $C^1$ as a map
\begin{align}
I \longrightarrow
\mathcal B\bigl(L^2(M,\Sigma_gM),H^1(M,\Sigma_gM)\bigr),
\end{align}
with derivative
\begin{align}\label{eq:Pt-deriv}
\frac{\dd}{\dd t}P_t
=
\frac{1}{2\pi i}\oint_{\Gamma}
(z-\widetilde{D}_t)^{-1}\,\dot{\widetilde{D}}_t\,(z-\widetilde{D}_t)^{-1}\,\dd z.
\end{align}

\item If $u_1^0,\dots,u_m^0$ is any $L^2$-orthonormal basis of $\Ran(P_{t_0})$,
then there exist $C^1$ maps
\begin{align}
u_1,\dots,u_m\colon U\longrightarrow H^1(M,\Sigma_gM)
\end{align}
on some neighbourhood $U\subset I$ of $t_0$ such that
\begin{align}
u_i(t_0)=u_i^0,\qquad
\langle u_i(t),u_j(t)\rangle_{L^2}=\delta_{ij},
\qquad
\Ran(P_t)=\Span\{u_1(t),\dots,u_m(t)\}.
\end{align}
Consequently, setting
\begin{align}
\phi_i(t):=A_t^{-1/2}u_i(t),\qquad i=1,\dots,m,
\end{align}
one obtains a $C^1$ family of $A_t$-orthonormal frames of the weighted spectral subspace
\begin{align}
E_t:=A_t^{-1/2}\Ran(P_t)\subset H^1(M,\Sigma_gM).
\end{align}
\end{enumerate}
\end{thm}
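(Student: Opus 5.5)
The plan is to first show that $t\mapsto\widetilde D_t$ is $C^1$ as a map $I\to\mathcal B(H^1,L^2)$. After that, the resolvent and projection statements reduce to standard Kato-type arguments.

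\textbf{Regularity of $\widetilde D_t$.} By Lemma~\ref{lemma:sqrt-C1}, $t\mapsto Q_t=A_t^{-1/2}$ is $C^1$ into $W^{1,p}$. By Lemma~\ref{lem:H1_multiplier_W1p_sec3}, multiplication gives a bounded bilinear embedding $W^{1,p}\to\mathcal B(H^1,H^1)$, and also $\mathcal B(L^2,L^2)$ via $L^\infty$. Therefore $\widetilde D_t=Q_t\D Q_t$ is a composition of $C^1$ operator-valued maps. Its derivative is
\begin{align}
\dot{\widetilde D}_t=\dot Q_t\D Q_t+Q_t\D\dot Q_t,
\end{align}
and it is continuous in $\mathcal B(H^1,L^2)$.

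\textbf{The resolvent on $\Gamma$.} Since $\Gamma$ is compact and misses $\operatorname{Spect}(\widetilde D_{t_0})$, the resolvent $R_{t_0}(z)=(z-\widetilde D_{t_0})^{-1}$ is uniformly bounded in $\mathcal B(L^2,H^1)$ for $z\in\Gamma$. This uses the graph-norm equivalence from Corollary~\ref{cor:typeA_discrete_family}. Write
\begin{align}
z-\widetilde D_t=(z-\widetilde D_{t_0})\bigl(\Id-R_{t_0}(z)(\widetilde D_t-\widetilde D_{t_0})\bigr),
\end{align}
where the inner factor is an operator on $H^1$. A Neumann series argument then shows that $z-\widetilde D_t$ is invertible for $|t-t_0|$ small, uniformly in $z\in\Gamma$. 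In particular $\Gamma$ stays in the resolvent set. Inversion is smooth on invertible elements of $\mathcal B(H^1,L^2)$, so $t\mapsto R_t(z)$ is $C^1$ into $\mathcal B(L^2,H^1)$, uniformly in $z$. Its derivative is
\begin{align}
\partial_t R_t(z)=R_t(z)\,\dot{\widetilde D}_t\,R_t(z),
\end{align}
obtained by differentiating $(z-\widetilde D_t)R_t(z)=\Id$. The integrand is continuous in $(t,z)$ and $\Gamma$ is compact, so we may differentiate under the contour integral. This yields \eqref{eq:Pt-deriv} and proves part (1). Self-adjointness of $\widetilde D_t$ makes $P_t$ an orthogonal projection. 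Since $\|P_t-P_{t_0}\|<1$ near $t_0$, its rank stays equal to $m$.

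\textbf{Moving frames.} For part (2) I would use Kato's transformation function. Define $W_t$ by the linear ODE
\begin{align}
\dot W_t=[\dot P_t,P_t]W_t,\qquad W_{t_0}=\Id,
\end{align}
in $\mathcal B(L^2)$. The generator $[\dot P_t,P_t]$ is skew-adjoint and continuous, so $W_t$ is unitary, $C^1$, and satisfies $P_t W_t=W_t P_{t_0}$. Set $u_i(t):=W_tu_i^0$. These vectors are orthonormal and span $\Ran P_t$.

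\textbf{Main obstacle: $H^1$-regularity of the frame.} The hardest point is that $W_t$ is only $C^1$ in $\mathcal B(L^2)$, while we need $u_i$ to be $C^1$ into $H^1$. To get this, write
\begin{align}
u_i(t)=P_tW_tu_i^0 .
\end{align}
Here $W_t$ is $C^1$ into $L^2$ and $P_t$ is $C^1$ into $\mathcal B(L^2,H^1)$, so the product rule gives $u_i\in C^1(U,H^1)$.

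If one prefers to avoid the ODE, an alternative is Gram--Schmidt applied to $P_tu_i^0$. These vectors are linearly independent near $t_0$ and are $C^1$ into $H^1$, and their Gram matrix is $C^1$ and invertible. This also gives $C^1$ orthonormal frames, but the condition $u_i(t_0)=u_i^0$ is clearer with Kato's construction.

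\textbf{The weighted frame.} Finally, put $\phi_i(t)=Q_tu_i(t)$. Multiplication by $Q_t$ is $C^1$ into $\mathcal B(H^1,H^1)$, so each $\phi_i$ is $C^1$ into $H^1$. The frame is $A_t$-orthonormal because $(Q_tu,Q_tv)_{A_t}=\langle u,v\rangle_{L^2}$, which is the isometry of Theorem~\ref{thm:isometric_weighted_dirac_realparam}(1).
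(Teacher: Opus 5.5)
Your proof is correct. Part (1) matches the paper's argument up to cosmetics. You factor $z-\widetilde D_t=(z-\widetilde D_{t_0})\bigl(\Id-R_{t_0}(z)(\widetilde D_t-\widetilde D_{t_0})\bigr)$ and sum the Neumann series in $\mathcal B(H^1)$. The paper instead writes $z-\widetilde D_t=\bigl[\Id-(\widetilde D_t-\widetilde D_{t_0})R_{t_0}(z)\bigr](z-\widetilde D_{t_0})$ and sums it in $\mathcal B(L^2)$; it also uses the resolvent identity where you use smoothness of inversion.

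The genuine difference is in part (2). The paper does not use Kato's transformation function. It sets $v_i(t)=P_tu_i^0$, forms the Gram matrix $G(t)=\bigl(\langle v_i(t),v_j(t)\rangle_{L^2}\bigr)_{i,j}$, and takes the symmetric orthonormalization $u_i(t)=\sum_k v_k(t)\bigl(G(t)^{-1/2}\bigr)_{ki}$. That route is purely finite-dimensional:
\begin{itemize}
\item each $v_i$ is $C^1$ into $H^1$ directly from part (1), so no lifting step from $L^2$ to $H^1$ is needed;
\item $G(t)^{-1/2}$ is $C^1$ because $G$ is a $C^1$ family of positive-definite $m\times m$ matrices;
\item $G(t_0)=\Id$ gives $u_i(t_0)=u_i^0$ immediately.
\end{itemize}
So, contrary to your remark, the normalization at $t_0$ is just as transparent there, and in your Gram--Schmidt variant.

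Your construction costs more steps: a linear ODE in $\mathcal B(L^2)$, the checks that $W_t$ is unitary and that $P_tW_t=W_tP_{t_0}$ (via $P\dot PP=0$ and $\dot P=\bigl[[\dot P,P],P\bigr]$), and the observation $u_i=P_tW_tu_i^0$ to recover $H^1$-regularity. You handle all of these correctly. In exchange it yields more:
\begin{itemize}
\item a single unitary on all of $L^2$ that carries $\Ran P_{t_0}$ onto $\Ran P_t$ for every initial frame at once;
\item frames satisfying the parallel-transport condition $P_t\dot u_i(t)=0$;
\item validity on the whole interval on which $\Gamma$ stays in the resolvent set. The Gram-matrix construction needs $P_t$ to be injective on $\Ran P_{t_0}$, which is guaranteed only locally, e.g.\ when $\|P_t-P_{t_0}\|<1$.
\end{itemize}
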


\begin{proof}
First, by Lemma~\ref{lemma:sqrt-C1}, the map
$t\mapsto A_t^{-1/2}$ is of class $C^1$ as a map
$I\to W^{1,p}(M,\End(\Sigma_gM))$,
and the map
\begin{align}
t\longmapsto \widetilde{D}_t
\in\mathcal B\bigl(H^1(M,\Sigma_gM),L^2(M,\Sigma_gM)\bigr)
\end{align}
is continuously differentiable.

For $z\in \Gamma$, write
\begin{align}
R_t(z):=(z-\widetilde{D}_t)^{-1}.
\end{align}
Since $\Gamma\subset \C\setminus \operatorname{Spect}(\widetilde{D}_{t_0})$ is compact and
$z\mapsto R_{t_0}(z)$ is continuous as an
$\mathcal B(L^2,H^1)$-valued map, we may define
\begin{align}
M_{\Gamma}:=\sup_{z\in \Gamma}\|R_{t_0}(z)\|_{\mathcal B(L^2,H^1)}<\infty.
\end{align}
Because $t\mapsto \widetilde{D}_t$ is continuous in
$\mathcal B(H^1,L^2)$, after shrinking the interval we may assume that
\begin{align}
\|\widetilde{D}_t-\widetilde{D}_{t_0}\|_{\mathcal B(H^1,L^2)},
\le \frac{1}{2M_{\Gamma}}
\qquad\text{for all }t\text{ near }t_0.
\end{align}
Hence, for every $z\in \Gamma$,
\begin{align}
\|(\widetilde{D}_t-\widetilde{D}_{t_0})R_{t_0}(z)\|_{\mathcal B(L^2)}
\le
\|\widetilde{D}_t-\widetilde{D}_{t_0}\|_{\mathcal B(H^1,L^2)}
\|R_{t_0}(z)\|_{\mathcal B(L^2,H^1)}
\le \frac12.
\end{align}
Therefore,
\begin{align}
z-\widetilde{D}_t
=
\bigl[\Id-(\widetilde{D}_t-\widetilde{D}_{t_0})R_{t_0}(z)\bigr]
(z-\widetilde{D}_{t_0})
\end{align}
is invertible by the Neumann series, and thus
\begin{align}
\Gamma\subset \C\setminus \operatorname{Spect}(\widetilde{D}_t)
\end{align}
for all $t$ near $t_0$.

Since each $\widetilde{D}_t$ is self-adjoint with compact resolvent,
$P_t$ is the spectral projection associated with an eigenvalue of multiplicity $m$ inside $\Gamma$; in particular $P_t$ is an orthogonal projection.

We next prove the $C^1$-dependence of $P_t$.
The resolvent identity yields
\begin{align}
R_t(z)-R_s(z)
=
R_t(z)(\widetilde{D}_t-\widetilde{D}_s)R_s(z).
\end{align}
Since $t\mapsto \widetilde{D}_t$ is of class $C^1$ in
$\mathcal B(H^1,L^2)$, it follows that
$t\mapsto R_t(z)$ is of class $C^1$ in
$\mathcal B(L^2,H^1)$, and
\begin{align}
\dot{R}_t(z)=R_t(z)\dot{\widetilde{D}}_tR_t(z).
\end{align}
Integrating over $\Gamma$, we obtain
\begin{align}
P_t=\frac{1}{2\pi i}\oint_{\Gamma}R_t(z)\,\dd z
\end{align}
and
\begin{align}
\dot{P}_t
=
\frac{1}{2\pi i}\oint_{\Gamma}\dot{R}_t(z)\,\dd z
=
\frac{1}{2\pi i}\oint_{\Gamma}
(z-\widetilde{D}_t)^{-1}\dot{\widetilde{D}}_t(z-\widetilde{D}_t)^{-1}\,\dd z.
\end{align}
This proves (1).

To prove that $\rank(P_t)$ is constant, note that $P_t\to P_{t_0}$ in
$\mathcal B(L^2)$. After shrinking the interval further we may assume
\begin{align}
\|P_t-P_{t_0}\|_{\mathcal B(L^2)}<1,
\qquad\text{for all }t\text{ near }t_0.
\end{align}
If $u\in \Ran(P_{t_0})$ and $P_tu=0$, then
\begin{align}
u=P_{t_0}u-P_tu=(P_{t_0}-P_t)u,
\end{align}
hence
\begin{align}
\|u\|_{L^2}\le \|P_t-P_{t_0}\|_{\mathcal B(L^2)}\|u\|_{L^2}<\|u\|_{L^2},
\end{align}
a contradiction. Therefore
\begin{align}
P_t|_{\Ran(P_{t_0})}:\Ran(P_{t_0})\longrightarrow \Ran(P_t)
\end{align}
is injective, and thus $\rank(P_{t_0})\le \rank(P_t)$.
Interchanging $t$ and $t_0$ gives the reverse inequality, so
\begin{align}
\rank(P_t)=\rank(P_{t_0})=m.
\end{align}

Finally, choose an $L^2$-orthonormal basis $u_1^0,\dots,u_m^0$ of $\Ran(P_{t_0})$, and set
\begin{align}
v_i(t):=P_tu_i^0,\qquad i=1,\dots,m.
\end{align}
By the injectivity just proved, $v_1(t),\dots,v_m(t)$ are linearly independent for $t$ near $t_0$.
Define the Gram matrix
\begin{align}
G(t):=
\bigl(\langle v_i(t),v_j(t)\rangle_{L^2}\bigr)_{1\le i,j\le m}.
\end{align}
Then $G(t)$ is a positive-definite matrix depending $C^1$ on $t$, and so does $G(t)^{-1/2}$.

Define
\begin{align}
u_i(t):=\sum_{k=1}^m v_k(t)\bigl(G(t)^{-1/2}\bigr)_{ki},
\qquad i=1,\dots,m.
\end{align}
Then
\begin{align}
\langle u_i(t),u_j(t)\rangle_{L^2}=\delta_{ij},
\qquad
\Ran(P_t)=\Span\{u_1(t),\dots,u_m(t)\},
\end{align}
and each $u_i(t)$ depends $C^1$ on $t$ as an $H^1$-valued map.
This proves the first part of (2).

Now define
\begin{align}
\phi_i(t):=A_t^{-1/2}u_i(t).
\end{align}
By the  $C^1$-regularity of $t\mapsto A_t^{-1/2}$, each $\phi_i(t)$ is $C^1$ in $H^1$.
Moreover,
\begin{align}
(\phi_i(t),\phi_j(t))_{A_t}
=
\int_M \langle A_t\phi_i(t),\phi_j(t)\rangle\,d\mathrm{vol}_g
=
\int_M \langle A_t^{1/2}\phi_i(t),A_t^{1/2}\phi_j(t)\rangle\,d\mathrm{vol}_g
=
\langle u_i(t),u_j(t)\rangle_{L^2}
=
\delta_{ij}.
\end{align}
Since $\Ran(P_t)=\Span\{u_1(t),\dots,u_m(t)\}$, it follows that
\begin{align}
E_t=A_t^{-1/2}\Ran(P_t)=\Span\{\phi_1(t),\dots,\phi_m(t)\}.
\end{align}
The proof is complete.
\end{proof}

In the present setting, Theorem~\ref{thm:multiple_cluster_projector} identifies the $C^1$ projection of an eigenvalue of multiplicity $m$ and the corresponding local $C^1$ frames. The following remarks explain how this relates to the classical simple-eigenvalue situation, and then to the difficulties caused by crossings.
\begin{rmk}
For a simple eigenvalue, one may use Lyapunov--Schmidt reduction
together with the implicit function theorem to obtain a local $C^1$
branch of eigenpairs $(\lambda_n,\varphi_n)$; see e.g.
\cite[Section~2]{Ambrosetti2007NonlinerAna}.
In our setting, however, simplicity cannot be taken for granted.
Already for the classical Dirac operator~$\D$,
eigenvalues are in general not simple, see e.g. \cite{Dahl2003Dirac}.
The weighted operator~$B_t = A_t^{-1}\D$ inherits this
obstruction: its eigenvalues have multiplicity greater than one in general, and no choice of admissible weight~$A_t \in \mathcal{P}_{\Lambda_1,\Lambda_2}$ can reduce them to simple ones. It is therefore more natural to formulate the perturbation theory at the level of eigenvalues of higher multiplicity. This is precisely the role of Theorem~\ref{thm:multiple_cluster_projector}, which yields~$C^1$
projections and, consequently,~$C^1$ local frames of the
corresponding eigenspaces.
\end{rmk}
The previous remark describes the favourable simple--situation. In contrast, when eigenvalues of higher multiplicity give rise to crossings, see Figure~\ref{fig:branch-projected-2d}, the $C^1$-regularity furnished by Theorem~\ref{thm:multiple_cluster_projector} should no longer be interpreted as a canonical $C^1$--regularity of the globally ordered eigenvalue branches~$\mathfrak{s}_t$.
\begin{rmk}
The $C^1$-regularity furnished by Theorem~\ref{thm:multiple_cluster_projector} pertains to the projection of an eigenvalue with higher multiplicity and to local frames of the corresponding spectral subspace. It should not be confused with $C^1$-regularity of the globally ordered eigenvalue map
\begin{align}
t\longmapsto \mathfrak s_t(j).
\end{align}
Indeed, even in finite dimensions, a crossing of smooth eigenvalue branches may produce a corner after sorting. For example,
\begin{align}
H(t)=
\begin{pmatrix}
t & 0\\
0 & 2-t
\end{pmatrix}
\end{align}
has smooth eigenvalue branches $t$ and $2-t$, whereas the ordered eigenvalues are $1-|t-1|$ and $1+|t-1|$, which are not $C^1$ at $t=1$. Thus, in the presence of crossings, one should regard the $C^1$ objects as the local spectral projections and the labelled branches they determine at each eigenvalue, rather than the globally sorted branches themselves. The weighted Hellmann--Feynman identity below is therefore formulated for a locally chosen $C^1$ eigenpair branch.
\end{rmk}
Before turning to variation formulas, we record the precise spectral regularity furnished by the previous analysis. The natural $C^1$ object is not an individual eigenvalue branch a priori, but rather the spectral projection associated with an eigenvalue. More precisely, we consider the conjugated family
\begin{align}
\widetilde{D}_t &= A_t^{-1/2}\D A_t^{-1/2},\\
B_t &= A_t^{-1}\D,\\
U_t &= A_t^{1/2},
\end{align}
so that
\begin{align}
B_t = U_t^{-1}\widetilde{D}_t U_t.
\end{align}
Under \eqref{hyp:H1}, the maps $t\mapsto A_t^{\pm 1/2}$ are of class $C^1$, and hence
\begin{align}
t\longmapsto \widetilde{D}_t
\in \mathcal B\bigl(H^1(M,\Sigma_gM),L^2(M,\Sigma_gM)\bigr)
\end{align}
is a $C^1$ family. Consequently, if $\Gamma\subset\C$ is a fixed positively oriented Jordan curve which encloses an eigenvalue and does not meet the rest of the spectrum, then the associated projection
\begin{align}
\widetilde{P}_t:=\frac{1}{2\pi i}\oint_{\Gamma}(z-\widetilde{D}_t)^{-1}\,\dd z
\end{align}
depends~$C^1$ on~$t$. Equivalently, the corresponding projection in the weighted picture,
\begin{align}
P_t:=U_t^{-1}\widetilde{P}_tU_t,
\end{align}
is also of class $C^1$. Thus the preceding results provide $C^1$-regularity for weighted eigenspaces and for local frames, but they do not by themselves produce a canonical $C^1$ labeling of individual eigenvalues. The variational identity below should therefore be understood as applying once a $C^1$ eigenpair branch has been chosen.

\medskip
\noindent
We next derive the weighted Hellmann--Feynman variational identity for $C^1$ eigenpair branches of
\begin{align}
\D\varphi(t)=\lambda(t)\,A_t\varphi(t),
\end{align}
under the normalization $\int_M\langle A_t\varphi(t),\varphi(t)\rangle\,dv_g=1$.

\begin{thm}\label{thm:WHF_prelim}
Let $I\subset\R$ be an interval and let $t\mapsto A_t\in \mathcal P_{\Lambda_1,\Lambda_2}$ be of class $C^1$.
Assume there exists a $C^1$ eigenpair branch
$t\mapsto (\lambda(t),\varphi(t))$ with $\varphi(t)\in H^1(M, \Sigma_gM)$ such that
\begin{align}\label{eq:eigenpair_branch_prelim}
\D\varphi(t)=\lambda(t)\,A_t\varphi(t),
\qquad
\int_M\Abracket{A_t\varphi(t),\varphi(t)}\,\dv_g=1,
\qquad \forall t\in I.
\end{align}
Then for all $t\in I$,
\begin{equation}\label{eq:WHF}
\lambda'(t)=-\lambda(t)\int_M\Abracket{(\p_tA_t)\varphi(t),\varphi(t)}\,\dv_g.
\end{equation}
\end{thm}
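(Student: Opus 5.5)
The plan is to differentiate the eigenvalue equation directly and test it against $\varphi(t)$. This is the weighted analogue of deriving \eqref{eq:HF}. I work with the weighted equation rather than the conjugated operator $\widetilde D_t$, because the normalization is already expressed in terms of $A_t$.

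Since $t\mapsto\varphi(t)\in H^1$ is $C^1$ and $\D:H^1\to L^2$ is bounded, the map $t\mapsto\D\varphi(t)$ is $C^1$ in $L^2$, with derivative $\D\dot\varphi(t)$. By \eqref{hyp:H1} and the embedding $W^{1,p}\hookrightarrow L^\infty$, the map $t\mapsto A_t\in L^\infty$ is $C^1$. Hence $t\mapsto A_t\varphi(t)\in L^2$ is $C^1$, and the product rule gives its derivative as $\dot A_t\varphi+A_t\dot\varphi$. Differentiating \eqref{eq:eigenpair_branch_prelim} in $L^2$ then yields
\begin{align}
\D\dot\varphi=\lambda'A_t\varphi+\lambda\dot A_t\varphi+\lambda A_t\dot\varphi .
\end{align}

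Next I take the real $L^2$ inner product of this identity with $\varphi(t)$. On the left, symmetry of $\D$ on $H^1$ gives $\langle\D\dot\varphi,\varphi\rangle=\langle\dot\varphi,\D\varphi\rangle=\lambda\langle\dot\varphi,A_t\varphi\rangle$. Since $A_t$ is fiberwise symmetric, this equals $\lambda\langle A_t\dot\varphi,\varphi\rangle$, and it cancels exactly with the last term on the right. Everything is real here: $\lambda$ is real, so I would use $\Re$ throughout if the spinor bundle is treated as complex. What remains is
\begin{align}
0=\lambda'(t)\int_M\langle A_t\varphi,\varphi\rangle\,\dv_g+\lambda(t)\int_M\langle\dot A_t\varphi,\varphi\rangle\,\dv_g .
\end{align}
Using the normalization, the first integral equals $1$, and this is exactly \eqref{eq:WHF}.

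As a cross-check, I would also differentiate the normalization, which gives $\int\langle\dot A_t\varphi,\varphi\rangle+2\Re\int\langle A_t\dot\varphi,\varphi\rangle=0$. Together with $\lambda=\lambda\int\langle A_t\varphi,\varphi\rangle=\int\langle\D\varphi,\varphi\rangle$, this offers an alternative route through \eqref{eq:trial-derivative}. The main point requiring care is justifying the differentiation, namely that $\D\dot\varphi$ makes sense and that the pairings are legitimate. This is handled by the assumed $C^1$ regularity in $H^1$ and the symmetry of $\D$ on $H^1$ (Theorem~\ref{thm:isometric_weighted_dirac_realparam}). No spectral simplicity is needed, since the branch is assumed given.
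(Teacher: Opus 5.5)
Your proposal is correct and follows essentially the same route as the paper. You differentiate $\D\varphi=\lambda A_t\varphi$, pair with $\varphi$, cancel the $\lambda\langle A_t\dot\varphi,\varphi\rangle$ terms using the symmetry of $\D$ on $H^1$ and of $A_t$ fiberwise, and conclude with the normalization; the paper also differentiates the normalization (not actually needed), and your justification of the $L^2$-differentiability via $W^{1,p}\hookrightarrow L^\infty$ makes explicit a step the paper leaves implicit.
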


\begin{proof}
Fix $t\in I$ and write $\lambda=\lambda(t)$ and $\varphi=\varphi(t)$.
Differentiate the normalization in \eqref{eq:eigenpair_branch_prelim}. By the product rule,
\begin{equation}\label{eq:dnorm}
0=\int_M\Abracket{\p_tA_t\,\varphi,\varphi}\,\dv_g
 +\int_M\Abracket{A_t\,\p_t\varphi,\varphi}\,\dv_g
 +\int_M\Abracket{A_t\,\varphi,\p_t\varphi}\,\dv_g.
\end{equation}
Since $A_t$ is fiberwise self-adjoint,
$\Abracket{A_t\varphi,\p_t\varphi}=\Abracket{A_t\p_t\varphi,\varphi}$, hence
\begin{equation}\label{eq:dnorm-realpart}
 0=\int_M\Abracket{\p_tA_t\,\varphi,\varphi}\,\dv_g
 +2\int_M\Abracket{A_t\,\p_t\varphi,\varphi}\,\dv_g.
\end{equation}

Differentiate the eigenvalue equation $\D\varphi=\lambda\,A_t\varphi$:
\begin{align}
\D(\p_t\varphi)
=\lambda' A_t\varphi+\lambda(\p_tA_t)\varphi+\lambda A_t(\p_t\varphi).
\end{align}
Pair with $\varphi$ in $L^2$ to obtain
{ \small
\begin{equation}\label{eq:ip}
 \int_M\Abracket{\D(\p_t\varphi),\varphi}\,\dv_g
=\lambda'\int_M\Abracket{A_t\varphi,\varphi}\,\dv_g
 +\lambda\int_M\Abracket{\p_tA_t\,\varphi,\varphi}\,\dv_g
 +\lambda\int_M\Abracket{A_t(\p_t\varphi),\varphi}\,\dv_g.
\end{equation}
}
Using the $L^2$ self-adjointness of $\D$ and the eigenvalue equation again,
\begin{align}
 \int_M\Abracket{\D(\p_t\varphi),\varphi}\,\dv_g
=\int_M\Abracket{\p_t\varphi,\D\varphi}\,\dv_g
=\lambda\int_M\Abracket{\p_t\varphi,A_t\varphi}\,\dv_g
=\lambda\int_M\Abracket{A_t(\p_t\varphi),\varphi}\,\dv_g.
\end{align}
Substituting this into \eqref{eq:ip} and canceling the common term yields
\begin{align}
0=\lambda'\int_M\Abracket{A_t\varphi,\varphi}\,\dv_g
 +\lambda\int_M\Abracket{\p_tA_t\,\varphi,\varphi}\,\dv_g.
\end{align}
By $\int_M\Abracket{A_t\varphi,\varphi}\dv_g=1$, this is \eqref{eq:WHF}.
\end{proof}

As a first consequence, we obtain an $\arsinh$-Lipschitz bound along any eigenvalue.

\begin{cor}\label{cor:asinhLip_branch_prelim}
In the setting of Theorem~\ref{thm:WHF_prelim}, let
\begin{align}\label{eq:LJ_def_prelim}
L_I:=\frac{C_1}{\Lambda_1}\sup_{\tau\in I}\|\p_\tau A_\tau\|_{W^{1,p}},
\end{align}
where $C_1$ is the Sobolev constant for the embedding of $W^{1,p}$ into $L^{\infty}$.

Then for all $a,b\in I$,
\begin{align}\label{eq:asinhLip_branch_est_prelim}
|\arsinh(\lambda(a))-\arsinh(\lambda(b))|\le L_I\,|a-b|.
\end{align}
\end{cor}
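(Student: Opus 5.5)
The plan is to bound the derivative of $f(t):=\arsinh(\lambda(t))$ and then integrate. By Theorem~\ref{thm:WHF_prelim}, $\lambda$ is $C^1$ on $I$ and satisfies \eqref{eq:WHF}, so $f$ is $C^1$ with
\begin{align}
f'(t)=\frac{\lambda'(t)}{\sqrt{1+\lambda(t)^2}}
=-\frac{\lambda(t)}{\sqrt{1+\lambda(t)^2}}\int_M\Abracket{(\p_tA_t)\varphi(t),\varphi(t)}\,\dv_g .
\end{align}
The elementary inequality $|\lambda|/\sqrt{1+\lambda^2}\le 1$ then gives $|f'(t)|\le \bigl|\int_M\Abracket{(\p_tA_t)\varphi,\varphi}\,\dv_g\bigr|$. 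This inequality is where the $\arsinh$ metric is used: it turns the multiplicative bound $|\lambda'|\le L_I|\lambda|$ into an additive one.

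Next I will estimate the integral pointwise. The fiberwise bound gives $|\Abracket{(\p_tA_t)\varphi,\varphi}|\le \|\p_tA_t\|_{L^\infty}|\varphi|^2$. The Sobolev embedding $W^{1,p}\hookrightarrow L^\infty$ for $p>n$ gives $\|\p_tA_t\|_{L^\infty}\le C_1\|\p_tA_t\|_{W^{1,p}}$. The normalization in \eqref{eq:eigenpair_branch_prelim} together with $A_t\ge\Lambda_1\Id$ yields
\begin{align}
\Lambda_1\int_M|\varphi(t)|^2\,\dv_g\le\int_M\Abracket{A_t\varphi(t),\varphi(t)}\,\dv_g=1 .
\end{align}
Combining these, $|f'(t)|\le \frac{C_1}{\Lambda_1}\|\p_tA_t\|_{W^{1,p}}\le L_I$ for every $t\in I$, and the same chain also gives $|\lambda'(t)|\le L_I|\lambda(t)|$.

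Finally, for $a,b\in I$ the interval between them lies in $I$, so by the mean value theorem (or the fundamental theorem of calculus) $|f(a)-f(b)|\le \sup_I|f'|\,|a-b|\le L_I|a-b|$, which is \eqref{eq:asinhLip_branch_est_prelim}. If $L_I=\infty$ the statement is vacuous, so there is nothing to check in that case.

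I do not expect a real obstacle, since the $C^1$ regularity of the branch is an assumption of Theorem~\ref{thm:WHF_prelim}. The only step needing care is the conversion from $A_t$-normalization to an $L^2$ bound, which is what produces the factor $1/\Lambda_1$. Another point to watch is that the constant $C_1$ in \eqref{eq:LJ_def_prelim} must be the embedding constant for the norm $\|\cdot\|_{W^{1,p}}=\|\cdot\|_{\mathcal A}$ used in the statement.
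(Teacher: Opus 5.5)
Your proposal is correct and follows the paper's proof essentially step for step: the normalization with $A_t\ge\Lambda_1\Id$ gives $\|\varphi(t)\|_{L^2}^2\le\Lambda_1^{-1}$, and the Sobolev embedding bounds $\|\p_tA_t\|_{L^\infty}$. The weighted Hellmann--Feynman identity then yields $|\lambda'|\le L_I|\lambda|$, and the inequality $|\lambda|/\sqrt{1+\lambda^2}\le 1$ followed by integration gives the $\arsinh$-Lipschitz bound. The only differences are cosmetic: you absorb the factor $|\lambda|/\sqrt{1+\lambda^2}$ before estimating the integral, and you make explicit the embedding constant $C_1$ that the paper uses implicitly.
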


\begin{proof}
From the normalization in \eqref{eq:eigenpair_branch_prelim} and $A_t\ge \Lambda_1\Id$,
\begin{align}
1=\int_M\Abracket{A_t\varphi(t),\varphi(t)}\,\dv_g \ \ge\ \Lambda_1\int_M|\varphi(t)|^2\,\dv_g,
\end{align}
hence $\|\varphi(t)\|_{L^2}^2\le \Lambda_1^{-1}$ for all $t\in I$.
Therefore,
\begin{align}
\Bigl|\int_M\Abracket{(\p_tA_t)\varphi(t),\varphi(t)}\,\dv_g\Bigr|
\le \|\p_tA_t\|_{L^\infty}\,\|\varphi(t)\|_{L^2}^2
\le \frac{\|\p_tA_t\|_{L^\infty}}{\Lambda_1}
\le L_I.
\end{align}
Combining with \eqref{eq:WHF} yields
\begin{align}
|\lambda'(t)|\le |\lambda(t)|\,L_I,
\qquad\text{for all }t\in I.
\end{align}
Consequently,
\begin{align}
\Bigl|\frac{\dd}{\dd t}\arsinh(\lambda(t))\Bigr|
=\Bigl|\frac{\lambda'(t)}{\sqrt{1+\lambda(t)^2}}\Bigr|
\le L_I\,\frac{|\lambda(t)|}{\sqrt{1+\lambda(t)^2}}
\le L_I.
\end{align}
Integrating over the interval with endpoints $a$ and $b$ gives \eqref{eq:asinhLip_branch_est_prelim}.
\end{proof}

To pass from branchwise estimates to the ordered spectrum, we use a stability statement for sorting.

\begin{lemma}\label{lem:sorting-stability}
Let $a_1 \le \cdots \le a_N$ and $b_1 \le \cdots \le b_N$ be two nondecreasing sequences of real numbers.
If there exists a permutation $\sigma \in S_N$ and $\delta > 0$ such that
\begin{align}
|a_i - b_{\sigma(i)}| \le \delta, \qquad \forall i.
\end{align}
Then
\begin{align}
|a_i - b_i| \le \delta, \qquad \forall i.
\end{align}
In particular, define~$\alpha_i:=\arsinh(a_i)$ and~$\beta_i:=\arsinh(b_i)$.
If there exists a permutation~$\sigma \in S_N$ and~$\delta_1 > 0$ such that
\begin{align}
|\alpha_i - \beta_{\sigma(i)}| \le \delta_1 \qquad \forall i,
\end{align}
then
\begin{align}
|\alpha_i - \beta_i| \le \delta_1 \qquad \forall i.
\end{align}
\end{lemma}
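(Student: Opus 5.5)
The plan is to prove the first assertion directly by a counting argument, and then obtain the $\arsinh$ version from it using monotonicity. Fix an index $i$ and suppose, say, $b_i > a_i + \delta$; the case $b_i < a_i - \delta$ is symmetric, after replacing all numbers by their negatives and reversing the order. Consider the index set $\{i, i+1, \dots, N\}$, which has $N-i+1$ elements. For every $k \ge i$ we have $a_k \ge a_i$; this is used only to fix the counting and is not needed otherwise. The real counting is on the $b$-side: for every $l \le i$, monotonicity gives $b_l \le b_i$. It is cleaner to count the set $J := \{k : a_k \ge b_i - \delta\}$.

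\textbf{Pigeonhole step.} By assumption $b_i > a_i + \delta$, so $a_k < b_i - \delta$ for all $k \le i$. Hence $J \subset \{i+1, \dots, N\}$, and therefore $\sharp J \le N - i$. On the other hand, for each $l \in \{i, \dots, N\}$ we have $b_l \ge b_i$. Set $k := \sigma^{-1}(l)$. Then $|a_k - b_l| \le \delta$ gives $a_k \ge b_l - \delta \ge b_i - \delta$, so $k \in J$. Since $\sigma^{-1}$ is injective, this produces $N-i+1$ distinct elements of $J$, which contradicts $\sharp J \le N-i$. Therefore $b_i \le a_i + \delta$.

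\textbf{Reverse inequality.} The inequality $a_i \le b_i + \delta$ follows from the same argument with the roles of $a$ and $b$ interchanged and $\sigma$ replaced by $\sigma^{-1}$. Note that $|b_l - a_{\sigma^{-1}(l)}| \le \delta$ is just the hypothesis reindexed. Combining the two bounds gives $|a_i - b_i| \le \delta$.

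\textbf{The $\arsinh$ statement.} Since $\arsinh$ is strictly increasing, the sequences $\alpha_i = \arsinh(a_i)$ and $\beta_i = \arsinh(b_i)$ are again nondecreasing. The first assertion then applies verbatim to $(\alpha_i)$, $(\beta_i)$ and $\delta_1$.

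I do not expect any real obstacle. The only point requiring care is the index bookkeeping: the set of $N-i+1$ large $b$'s must be matched through $\sigma^{-1}$ into the set of at most $N-i$ large $a$'s. If that bookkeeping turns out to be awkward, an equivalent route is the standard fact that the sorting map is $1$-Lipschitz in the $\ell^\infty$ norm. That fact can be proved by repeatedly swapping pairs of indices $i<j$ with $\sigma(i)>\sigma(j)$: each swap keeps the maximal deviation at most $\delta$, because for real numbers $x \le x'$ and $y \le y'$ one has
\begin{align}
\max\bigl(|x-y|,\ |x'-y'|\bigr)\le \max\bigl(|x-y'|,\ |x'-y|\bigr).
\end{align}
Repeating such swaps finitely many times ends at the identity permutation.
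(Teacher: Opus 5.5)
Your argument is correct and is essentially the paper's pigeonhole count using the injectivity of $\sigma$. You refute $b_i > a_i+\delta$ by sending the $N-i+1$ indices $l\ge i$ through $\sigma^{-1}$ into the set of at most $N-i$ indices $k$ with $a_k\ge b_i-\delta$; the paper refutes $a_k>b_k+\delta$ by the mirror-image count, noting that the $k$ partners $a_{\sigma^{-1}(l)}$, $l\le k$, all lie at or below $b_k+\delta<a_k$, and both arguments obtain the other inequality by symmetry and the $\arsinh$ version by monotonicity.
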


\begin{proof}
We prove the first statement by contradiction.
Suppose there exists~$k$ with $a_k > b_k + \delta$ and set~$x := b_k + \delta$.
Since~$b_1 \le \cdots \le b_k$, we have~$b_i \le b_k$ for all~$i \le k$, hence~$b_i + \delta \le x$.
By hypothesis, for each~$i \le k$ we have $a_{\sigma^{-1}(i)} \le b_i + \delta \le x$.
Thus at least~$k$ elements of the sequence $(a_i)$ are less than or equal to~$x$.
But $a_k > x$ implies that in the sorted sequence $a_1 \le \cdots \le a_N$ at most $k-1$ terms can be less than or equal to~$x$,
a contradiction. The case~$b_k > a_k + \delta$ is symmetric.

The $\arsinh$--statement follows since $\arsinh$ is strictly increasing and the sequences
$(\alpha_i)$ and $(\beta_i)$ are nondecreasing.
\end{proof}

\medskip
\noindent
We now combine the weighted Hellmann--Feynman identity with spectral localization and sorting to control the spectrum.

\begin{thm}\label{thm:MonLip_prelim}
Assume \eqref{hyp:H1}. Let $\mathfrak s_t\in\mathfrak{Mon}$ be the spectral tuple associated
with \eqref{eq:WD}. Then
\begin{align}\label{eq:MonLip}
d_a(\mathfrak s_t,\mathfrak s_s)\le L_I\,|t-s|,
\qquad \forall\,s,t\in I,
\end{align}
where $L_I$ denotes the constant introduced in \eqref{eq:LJ_def_prelim}.

In particular, $t\mapsto \mathfrak s_t$ is locally Lipschitz as a map into $(\mathfrak{Mon},d_a)$.
\end{thm}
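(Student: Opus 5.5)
The plan is to prove the bound index by index: for each fixed $j\in\bZ$ I will show that $f_j(t):=\arsinh(\mathfrak s_t(j))$ is Lipschitz on $I$ with constant $L_I$, and then take the supremum over $j$. This works because the constant produced by the weighted Hellmann--Feynman identity (Theorem~\ref{thm:WHF_prelim}, Corollary~\ref{cor:asinhLip_branch_prelim}) does not depend on the eigenvalue. Two preliminary facts are needed.
\begin{enumerate}
\item[(a)] Each $t\mapsto\mathfrak s_t(j)$ is continuous. This follows from Theorem~\ref{thm: C0 main thm}, since (H1) implies (H0) and so $t\mapsto A_t$ is continuous in $\|\cdot\|_{\mathcal A}$. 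The labelling is the fixed one normalized by the constant kernel dimension, so no index shift occurs along $I$.
\item[(b)] A continuous function whose upper and lower Dini derivatives are bounded in absolute value by $L_I$ at every point of $I$ is $L_I$-Lipschitz.
\end{enumerate}
By (a) and (b), it suffices to bound the one-sided Dini derivatives of $f_j$ at an arbitrary $t_0\in I$.

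Fix $t_0$ and set $\lambda_0:=\mathfrak s_{t_0}(j)$, an eigenvalue of multiplicity $m$. Apply Theorem~\ref{thm:multiple_cluster_projector} with a small circle $\Gamma$ around $\lambda_0$. This gives, for $t$ near $t_0$, a $C^1$ family $\phi_1(t),\dots,\phi_m(t)$ of $A_t$-orthonormal frames of the invariant subspace $E_t$. On $E_t$ the operator $B_t=A_t^{-1}\D$ is represented by the Hermitian matrix
\begin{align}
N(t)_{ik}:=\int_M\Abracket{\D\phi_k(t),\phi_i(t)}\,\dv_g ,
\end{align}
which is $C^1$ in $t$, and whose eigenvalues are exactly the $m$ weighted eigenvalues inside $\Gamma$.

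By continuity, for $t$ near $t_0$ these are precisely $\mathfrak s_t(j_0),\dots,\mathfrak s_t(j_0+m-1)$, where $j_0$ is the first index with $\mathfrak s_{t_0}(j_0)=\lambda_0$. This is where Lemma~\ref{lem:sorting-stability} is used: the sorted eigenvalues of $N(t)$ match the global indices. So $\mathfrak s_t(j)$ is the $(j-j_0+1)$-st ordered eigenvalue of $N(t)$.

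At $t_0$ we have $N(t_0)=\lambda_0\Id_m$. Then Weyl's inequality gives
\begin{align}
\mathfrak s_t(j)=\lambda_0+(t-t_0)\,\mu_{j-j_0+1}\bigl(N'(t_0)\bigr)+o(|t-t_0|)
\end{align}
for $t\to t_0^{+}$, and the analogous expansion with reversed order for $t\to t_0^-$. Hence the one-sided derivatives exist and are eigenvalues of $N'(t_0)$. I would compute $N'(t_0)$ by repeating the proof of Theorem~\ref{thm:WHF_prelim} in polarized form. Differentiating $(\phi_i,\phi_k)_{A_t}=\delta_{ik}$ and using $\D\phi_k(t_0)=\lambda_0A_{t_0}\phi_k(t_0)$ together with self-adjointness of $\D$, the $\dot\phi$ terms should cancel and leave
\begin{align}
N'(t_0)_{ik}=-\lambda_0\int_M\Abracket{\dot A_{t_0}\phi_k,\phi_i}\,\dv_g .
\end{align}
Since $\|\phi\|_{L^2}^2\le\Lambda_1^{-1}$ for $A_{t_0}$-unit vectors, the eigenvalues of this matrix are bounded by $|\lambda_0|\,C_1\|\dot A_{t_0}\|_{L^\infty}/\Lambda_1\le|\lambda_0|L_I$.

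Dividing by $\sqrt{1+\lambda_0^2}$ exactly as in Corollary~\ref{cor:asinhLip_branch_prelim} bounds every Dini derivative of $f_j$ at $t_0$ by $L_I$. By (b), $|f_j(t)-f_j(s)|\le L_I|t-s|$ for all $s,t\in I$, uniformly in $j$, and taking the supremum over $j$ gives \eqref{eq:MonLip}.

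The main obstacle is the polarized Hellmann--Feynman computation for $N'(t_0)$. The frame $\phi_i(t)$ spans only an invariant subspace, not an eigenspace, for $t\neq t_0$, so one must check that the cross terms involving $\dot\phi_i$ vanish at $t_0$. I expect they do, because there $N(t_0)=\lambda_0\Id$ and the differentiated orthonormality relations cancel them. The anti-Hermitian part of $\big((\dot\phi_i,\phi_k)_{A_t}\big)$ contributes a commutator with $N(t_0)=\lambda_0\Id$, which vanishes. The second, more routine point is the one-sided first-order expansion of ordered eigenvalues of a $C^1$ Hermitian matrix at a fully degenerate point, for which I would invoke Weyl's inequality on $N(t)-\lambda_0\Id-(t-t_0)N'(t_0)=o(|t-t_0|)$.
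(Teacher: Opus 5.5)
Your proof is correct, and it takes a genuinely different route from the paper's. The paper's proof takes for granted a $C^1$ eigenpair branch through every eigenvalue and applies Corollary~\ref{cor:asinhLip_branch_prelim} branch by branch. It then matches eigenvalues at nearby $s,t$ inside a window $[-R,R]$ by relabelling through crossings, and finally uses Lemma~\ref{lem:sorting-stability} to pass to the sorted list.

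You never follow individual branches. You work with the sorted functions $t\mapsto\mathfrak s_t(j)$ directly and use only the $C^1$ cluster frames that Theorem~\ref{thm:multiple_cluster_projector} actually constructs. You get one-sided derivatives at every $t_0$ from the compressed matrix $N(t)$ and Weyl's inequality, and conclude with the Dini mean-value step.

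The computation you flagged as the main obstacle is immediate. At $t_0$, self-adjointness of $\D$ and $\D\phi_i(t_0)=\lambda_0A_{t_0}\phi_i(t_0)$ turn the two $\dot\phi$-terms of $N'(t_0)_{ik}$ into $\lambda_0$ times the $\dot\phi$-terms of $\frac{d}{dt}(\phi_k,\phi_i)_{A_t}=0$. This gives $N'(t_0)_{ik}=-\lambda_0\int_M\langle\dot A_{t_0}\phi_k,\phi_i\rangle\,dv_g$ with no commutator argument. Likewise, Lemma~\ref{lem:sorting-stability} is not really needed for the index identification, which is just uniqueness of the nondecreasing enumeration of a multiset.

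What your route buys is rigour at degenerate points:
\begin{enumerate}
\item It does not need $C^1$ eigenvector branches, which need not exist at multiple eigenvalues. Rellich's example gives a smooth real symmetric $2\times 2$ family with no continuous choice of eigenvectors through the degenerate point.
\item Crossings require no separate relabelling, since each crossing is just another degenerate $t_0$.
\item Passing from nearby to arbitrary $s,t\in I$ is automatic.
\end{enumerate}
As a by-product you obtain the multiple-eigenvalue form of the weighted Hellmann--Feynman identity: the one-sided derivatives of $\mathfrak s_t(j)$ are eigenvalues of $-\lambda_0\bigl(\int_M\langle\dot A_{t_0}\phi_k,\phi_i\rangle\,dv_g\bigr)_{i,k}$.

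The paper's branch-plus-sorting organisation is more modular and reuses Corollary~\ref{cor:asinhLip_branch_prelim} verbatim. However, it depends on exactly the branch regularity that your argument avoids.
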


\begin{proof}
The weighted eigenvalue equation \eqref{eq:WD} is equivalent to the spectral equation $B_t\psi=\lambda\psi$. The indexing used to define $\mathfrak s_t$ does not change
with $t$ on $I$. This is already proved in Theorem~\ref{thm: C0 main thm} by showing that the
multiplicity of the eigenvalue $0$ is constant along the family. In particular, $\mathfrak s_t$ is a
well-defined element of $\mathfrak{Mon}$ for every $t\in I$.

Let $t_0\in I$ and assume $\lambda(t_0)$ is an eigenvalue of $B_{t_0}$. Then there exists a~$C^1$ eigenpair branch~$t\mapsto(\lambda(t),\varphi(t))$ on a neighborhood of $t_0$ such that
\begin{align}
\D\varphi(t)=\lambda(t)A_t\varphi(t),
\qquad
\int_M\langle A_t\varphi(t),\varphi(t)\rangle\,dv_g=1.
\end{align}
By the weighted Hellmann--Feynman identity and Corollary~\ref{cor:asinhLip_branch_prelim},
the function $\arsinh(\lambda(t))$ is Lipschitz on the branch, with
\begin{align}\label{eq:branch_asinh_Lip}
|\arsinh(\lambda(t))-\arsinh(\lambda(s))|\le L_I|t-s|,
\end{align}
for all $s,t$ in the interval where the branch is defined.

We now pass from a branch to the ordered spectrum, and we explain how crossings are handled.
Fix $t_0\in I$ and $R>0$. Since $B_{t_0}$ has compact resolvent, $\sigma(B_{t_0})\cap[-R,R]$ consists of finitely many eigenvalues, counted with multiplicity, where~$\sigma(B_{t_0})$ denotes the spectrum of~$B_{t_0}$ as a set.
Choose a smooth positively oriented contour~$\Gamma$ enclosing $\sigma(B_{t_0})\cap[-R,R]$
and no other eigenvalues, and define the projector
\begin{align}
P_t:=\frac{1}{2\pi i}\oint_\Gamma (B_t-z)^{-1}\,dz.
\end{align}
For $t$ close to $t_0$ the contour stays in the resolvent set, $\mathrm{rank}\,P_t$ is constant,
and $\mathrm{Ran}(P_t)$ is a finite-dimensional spectral subspace whose spectrum coincides with
$\sigma(B_t)\cap[-R,R]$. The restriction of $B_t$ to $\mathrm{Ran}(P_t)$ gives a finite-dimensional
self-adjoint family, hence its eigenvalues can be described by continuous eigenvalue functions.
Whenever such a local eigenvalue function is~$C^1$ in~$t$, it satisfies the estimate~\eqref{eq:branch_asinh_Lip}.

At an eigenvalue crossing the individual branches may fail to be
differentiable, and the labels may interchange.
This does not affect the estimate, because the finite-dimensional reduction still controls the eigenvalues in the window as a list counted with multiplicity.
After relabelling through the crossing, one can match the eigenvalues at two nearby $s,t$ in the window~$[-R,R]$ so that each matched pair satisfies
\begin{align}\label{eq:window_matching}
|\arsinh(\lambda_s)-\arsinh(\lambda_t)|\le L_I|t-s|.
\end{align}

For arbitrary $s,t\in I$ within the fixed window~$[-R,R]$, applying Lemma~\ref{lem:sorting-stability} gives the
same bound for the nondecreasingly ordered eigenvalues in~$[-R,R]$.

Finally, fix $s,t\in I$ and $j\in\mathbb Z$. Select $R$ sufficiently large such that $\mathfrak s_s(j)$ and $\mathfrak s_t(j)$ lie in~$[-R,R]$. The window estimate consequently yields
\begin{align}
|\arsinh(\mathfrak s_t(j))-\arsinh(\mathfrak s_s(j))|\le L_I|t-s|.
\end{align}
Taking the supremum over $j\in\mathbb Z$ gives \eqref{eq:MonLip}.
\end{proof}

To help the reader better visualize the discussion in this section, we make a final remark. The key observation is that branches which overlap or intersect in a two-dimensional projection remain separated in the corresponding three-dimensional representation. More precisely, once local labels are chosen near a multiple eigenvalue, one may work with the resulting $C^1$ branches obtained from the corresponding spectral projections; however, after these local labels are forgotten and the eigenvalues are reordered globally, the resulting sorted branches need not remain $C^1$ at a crossing.

Figures~\ref{fig:branch-labelled-3d} and~\ref{fig:branch-projected-2d} illustrate this point concretely. Figure~\ref{fig:branch-labelled-3d} displays the labelled branches as distinct curves in the coordinate space $(j,t,\lambda)$, making the $C^1$ regularity of each individual branch directly visible. By contrast, Figure~\ref{fig:branch-projected-2d} shows what happens after the branch labels are suppressed and the eigenvalues are reordered: in the projected two-dimensional picture, the corner produced by the sorting map at a crossing becomes clearly visible.

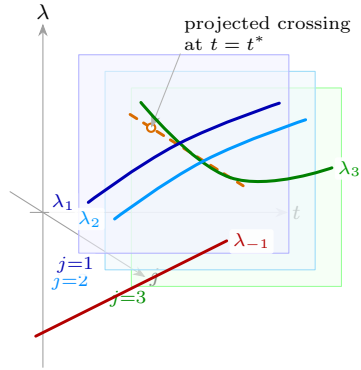
\begin{figure}[ht]
\centering
\begin{tikzpicture}[
  scale=0.92,
  >=Stealth,
  line cap=round,
  line join=round,
  albl/.style={font=\scriptsize, inner sep=1.5pt},
  tlbl/.style={
    font=\tiny,
    inner sep=1.3pt,
    fill=white,
    fill opacity=0.9,
    text opacity=1,
    rounded corners=1pt
  },
]

\draw[->, gray!75] \Pthd{0}{-0.2}{0} -- \Pthd{0}{3.85}{0} node[albl, right, text=black] {$t$};
\draw[->, gray!75] \Pthd{0}{0}{-3.2} -- \Pthd{0}{0}{3.9} node[albl, above, text=black] {$\lambda$};
\draw[->, gray!75] \Pthd{-1.25}{0}{0} -- \Pthd{3.9}{0}{0} node[albl, right, text=black] {$j$};


\fill[green!6, opacity=0.55]
  \Pthd{3}{0.15}{-0.5} -- \Pthd{3}{3.45}{-0.5} -- \Pthd{3}{3.45}{3.60} -- \Pthd{3}{0.15}{3.60} -- cycle;
\draw[green!40, thin]
  \Pthd{3}{0.15}{-0.5} -- \Pthd{3}{3.45}{-0.5} -- \Pthd{3}{3.45}{3.60} -- \Pthd{3}{0.15}{3.60} -- cycle;
\node[tlbl, text=green!55!black, anchor=north] at \Pthd{3}{0.10}{-0.5} {$j{=}3$};


\fill[cyan!5, opacity=0.6]
  \Pthd{2}{0.15}{-0.5} -- \Pthd{2}{3.45}{-0.5} -- \Pthd{2}{3.45}{3.60} -- \Pthd{2}{0.15}{3.60} -- cycle;
\draw[cyan!35, thin]
  \Pthd{2}{0.15}{-0.5} -- \Pthd{2}{3.45}{-0.5} -- \Pthd{2}{3.45}{3.60} -- \Pthd{2}{0.15}{3.60} -- cycle;

\node[tlbl, text=blue!45!cyan, anchor=north east] at \Pthd{2}{-0.05}{-0.5} {$j{=}2$};

\fill[blue!5, opacity=0.55]
  \Pthd{1}{0.15}{-0.5} -- \Pthd{1}{3.45}{-0.5} -- \Pthd{1}{3.45}{3.60} -- \Pthd{1}{0.15}{3.60} -- cycle;
\draw[blue!30, thin]
  \Pthd{1}{0.15}{-0.5} -- \Pthd{1}{3.45}{-0.5} -- \Pthd{1}{3.45}{3.60} -- \Pthd{1}{0.15}{3.60} -- cycle;
\node[tlbl, text=blue!70!black, anchor=north] at \Pthd{1}{0.10}{-0.5} {$j{=}1$};


\draw[orange!85!black, line width=1.1pt, dashed]
  \Pthd{-0.8}{1.70}{1.75} -- \Pthd{3.5}{1.70}{1.75};

\node[circle, fill=white, draw=orange!85!black, inner sep=1.1pt, line width=0.8pt]
  at \Pthd{0}{1.70}{1.75} {};

\draw[<-, gray!60, thin]
  \Pthd{0}{1.70}{1.75} -- \Pthd{-0.95}{2.55}{2.95}
  node[tlbl, text=black, anchor=south west, align=left]
  {projected crossing\\at $t=t^*$};


\draw[line width=1.15pt, green!50!black]
  plot[smooth, tension=0.72] coordinates {
    \Pthd{3}{0.30}{3.30} \Pthd{3}{1.70}{1.75} \Pthd{3}{3.30}{1.95}
  };
\node[tlbl, text=green!60!black, anchor=west]
  at \Pthd{3.04}{3.32}{1.95} {$\lambda_3$};

\draw[line width=1.15pt, blue!40!cyan]
  plot[smooth, tension=0.75] coordinates {
    \Pthd{2}{0.30}{0.55} \Pthd{2}{1.70}{1.75} \Pthd{2}{3.30}{2.60}
  };
\node[tlbl, text=blue!40!cyan, anchor=east] at \Pthd{2}{0.15}{0.55} {$\lambda_2$};

\draw[line width=1.15pt, blue!70!black]
  plot[smooth, tension=0.75] coordinates {
    \Pthd{1}{0.30}{0.55} \Pthd{1}{1.70}{1.75} \Pthd{1}{3.30}{2.60}
  };
\node[tlbl, text=blue!70!black, anchor=east] at \Pthd{1}{0.15}{0.55} {$\lambda_1$};

\draw[line width=1.1pt, red!70!black]
  plot[smooth, tension=0.8] coordinates {
    \Pthd{-1}{0.30}{-2.9} \Pthd{-1}{3.30}{-0.93}
  };
\node[tlbl, text=red!70!black, anchor=west]
  at \Pthd{-0.96}{3.32}{-0.93} {$\lambda_{-1}$};
\end{tikzpicture}
\caption{Three-dimensional branch-labelled view of the local weighted spectrum.}
\label{fig:branch-labelled-3d}
\end{figure}
Figure~\ref{fig:branch-labelled-3d} presents the branch-labelled three-dimensional picture. The curves $\lambda_1$ and $\lambda_2$ lie on different planes $j=1$ and $j=2$, so they remain distinct in three dimensions even though their projections onto the $(t,\lambda)$-plane coincide. The projection of the branch $\lambda_3$ onto the $(t,\lambda)$-plane passes through the same point at $t=t_0$, highlighted by the orange dashed line, while the negative index branch $\lambda_{-1}$ stays strictly below $\lambda=0$ throughout the interval. Overall, this figure emphasizes that branch crossing in projection does not destroy the regularity of the branch curves.

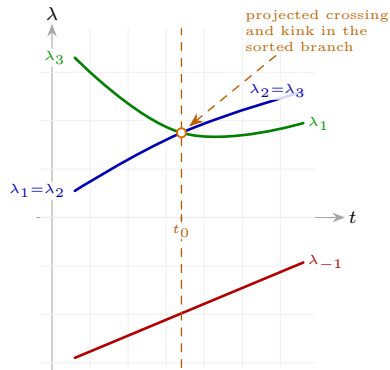
\begin{figure}[ht]
\centering
\begin{tikzpicture}[
    scale=0.92,
    >=Stealth,
    line cap=round,
    line join=round,
    albl/.style={font=\tiny, inner sep=1.2pt},
    tlbl/.style={
        font=\fontsize{5}{6}\selectfont,
        inner sep=1pt,
        fill=white,
        fill opacity=0.85,
        text opacity=1,
        rounded corners=0.8pt
    },
]

\draw[->, gray!70] \Ptwo{-0.2}{0} -- \Ptwo{3.85}{0}
  node[albl, right, text=black] {$t$};
\draw[->, gray!70] \Ptwo{0}{-3.2} -- \Ptwo{0}{4.0}
  node[albl, above, text=black] {$\lambda$};

\foreach \tt in {0.5,1.0,1.5,2.0,2.5,3.0} {
    \draw[gray!12, line width=0.2pt]
      \Ptwo{\tt}{-3.1} -- \Ptwo{\tt}{3.9};
}
\foreach \lam in {-3,-2,-1,0,1,2,3} {
    \draw[gray!12, line width=0.2pt]
      \Ptwo{-0.15}{\lam} -- \Ptwo{3.45}{\lam};
}


\draw[line width=1pt, red!70!black]
  plot[smooth, tension=0.80] coordinates {
    \Ptwo{0.30}{-2.90}
    \Ptwo{3.30}{-0.93}
  };
\node[tlbl, text=red!70!black, anchor=west]
  at \Ptwo{3.32}{-0.93} {$\lambda_{-1}$};

\draw[line width=1pt, blue!70!black]
  plot[smooth, tension=0.75] coordinates {
    \Ptwo{0.30}{0.55}
    \Ptwo{1.70}{1.75}
    \Ptwo{3.30}{2.60}
  };
\node[tlbl, text=blue!70!black, anchor=east]
  at \Ptwo{0.22}{0.55} {$\lambda_1{=}\lambda_2$};
\node[tlbl, text=blue!70!black, anchor=south west]
  at \Ptwo{2.55}{2.42} {$\lambda_2{=}\lambda_3$};

\draw[line width=1pt, green!50!black]
  plot[smooth, tension=0.72] coordinates {
    \Ptwo{0.30}{3.30}
    \Ptwo{1.70}{1.75}
    \Ptwo{3.30}{1.95}
  };
\node[tlbl, text=green!60!black, anchor=east,
      font=\fontsize{5}{6}\selectfont]
  at \Ptwo{0.22}{3.30} {$\lambda_3$};
\node[tlbl, text=green!60!black, anchor=west]
  at \Ptwo{3.32}{1.95} {$\lambda_1$};

\draw[orange!70!black, dashed, line width=0.5pt]
  \Ptwo{1.70}{-3.1} -- \Ptwo{1.70}{3.9};

\node[circle, fill=white, draw=orange!85!black,
      inner sep=1.2pt, line width=0.7pt]
  at \Ptwo{1.70}{1.75} {};

\draw[orange!70!black, line width=0.5pt]
  \Ptwo{1.70}{0.06} -- \Ptwo{1.70}{-0.06};
\node[tlbl, text=orange!70!black, anchor=north]
  at \Ptwo{1.70}{-0.08} {$t_0$};

\draw[->, orange!75!black, dashed, line width=0.55pt]
  \Ptwo{3.10}{3.55} -- \Ptwo{1.80}{1.88};
\node[tlbl, text=orange!75!black, anchor=south west, align=left]
  at \Ptwo{2.50}{3.38}
  {projected crossing\\and kink in the\\sorted branch};

\end{tikzpicture}
\caption{Projection onto the $(t,\lambda)$-plane and the apparent kink after sorting.}
\label{fig:branch-projected-2d}
\end{figure}

Figure~\ref{fig:branch-projected-2d} shows the projection of each
labelled branch in Figure~\ref{fig:branch-labelled-3d} onto the
$(t,\lambda)$-plane.
The red curve corresponds to the projection of~$\lambda_{-1}$, which
remains strictly on the side~$\lambda<0$ throughout and does not cross
the zero level; the dark blue curve corresponds to the coincident
projection of~$\lambda_1$ and~$\lambda_2$; the green curve corresponds
to the projection of~$\lambda_3$, and intersects the former at $t=t_0$.
The figure illustrates that, even though the local labelled branches
obtained from Theorem~\ref{thm:multiple_cluster_projector} are each~$C^1$ on their respective domains, the sorted spectral branches
obtained after reordering may develop a kink at a crossing point and
thereby lose~$C^1$ regularity.
In particular, at the crossing time $t_0$ one has
$\lambda_1(t_0)=\lambda_2(t_0)=\lambda_3(t_0)$,
so that the three branches merge at a single eigenvalue which
acquires triple multiplicity at $t_0$; it is precisely this sudden
increase in multiplicity that causes the sorted branches to lose
differentiability here.

The labels on the left and right sides of the figure correspond to the
sorting at $t<t_0$ and $t>t_0$, respectively.
As the parameter passes through $t_0$, the ordering between the green
curve and the blue curve is exchanged, and this is precisely the reason
for the appearance of the kink; the red curve $\lambda_{-1}$, by
contrast, maintains strict sign separation from all other branches
throughout, does not participate in any crossing, and its corresponding sorting label remains unchanged over the entire interval. More precisely, after reordering, the sorted eigenvalue functions~$\mathfrak{s}_t(1)$ and $\mathfrak{s}_t(3)$ each undergo a label
switch at $t_0$, developing a kink at the crossing point and ceasing
to be~$C^1$ as functions of~$t$; whereas~$\mathfrak{s}_t(2)$ follows
the blue curve throughout, undergoes no label switch, and retains~$C^1$ regularity on the entire interval.

Consequently, the $C^1$ regularity of the local branches alone is not sufficient to imply the $C^1$ regularity of the sorted spectral tuple
\begin{align}
t\longmapsto \mathfrak{s}_t.
\end{align}
Controlling the latter requires, in addition, the $\arsinh$-Lipschitz estimate along each branch from Corollary~\ref{cor:asinhLip_branch_prelim} and the sorting-stability conclusion of Lemma~\ref{lem:sorting-stability}. This is precisely what is achieved in the proof of Theorem~\ref{thm:MonLip_prelim}.

\section{Discussion and outlook: extension beyond the Dirac operator}\label{sec:discussion}

We conclude by pointing out that the arguments developed in Sections~\ref{sec:C0weight} and~\ref{sec:C1weight} are not tied to the spin Dirac operator itself, but depend only on a small collection of structural properties that remain valid for a much broader class of elliptic operators.

More precisely, let $(M^n,g)$ be a closed Riemannian manifold, let $E\to M$ be a Hermitian complex vector bundle, and let
\begin{align}
P:\Gamma(E)\longrightarrow \Gamma(E)
\end{align}
be a fixed first-order formally self-adjoint elliptic operator, see e.g. \cite{LawsonMichelsohn1989spin}. For a weight
\begin{align}
A\in \mathcal P_{\Lambda_1,\Lambda_2},
\end{align}
one may consider the weighted eigenvalue problem
\begin{align}
P\psi=\lambda A\psi,
\end{align}
or, equivalently, the conjugated operator
\begin{align}
\widetilde P(A):=A^{-1/2}PA^{-1/2}.
\end{align}

To make the scope of the argument transparent, we summarize below the structural ingredients that are actually used in the paper.

\begin{table}[ht]
\centering
\caption{Structural properties used in the proofs and their level of specificity.}\label{tab:general-elliptic-extension}
\begin{tabular}{|p{0.34\textwidth}|p{0.32\textwidth}|p{0.18\textwidth}|}
\hline
\textbf{Property} & \textbf{Where it is used} & \textbf{Dirac-specific?} \\
\hline
First-order ellipticity and compact resolvent & Corollary~\ref{cor:typeA_discrete_family}; compactness of the resolvent & No \\
\hline
$L^2$-self-adjointness & Theorem~\ref{thm:isometric_weighted_dirac_realparam}(2); Theorem~\ref{thm:WHF_prelim} & No \\
\hline
Elliptic graph-norm estimate
& Corollary~\ref{cor:typeA_discrete_family}& No \\
\hline
The elliptic operator $P$ is independent of $t$& Proof of Theorem~\ref{thm:WHF_prelim} & No \\
\hline
Constancy of the kernel dimension & Argument proving $k(A)=0$ in Theorem~\ref{thm: C0 main thm} & Needs verification\\
\hline
Weyl asymptotics, ensuring that the spectrum defines an element of $\mathfrak{Mon}$ & Definition~\ref{def:ordered_spectrum_counting} and the global spectral parametrization & No \\
\hline
\end{tabular}
\end{table}

As Table~\ref{tab:general-elliptic-extension} shows, almost all steps in the proof rely only on abstract elliptic theory rather than on special features of spin geometry. The only point that deserves separate comment is the stability of the kernel. In the present setting, however, this remains true, since
\begin{align}
\ker \widetilde P(A)=A^{1/2}\ker P,
\end{align}
and therefore
\begin{align}
\dim \ker \widetilde P(A)=\dim \ker P,
\qquad
\text{for all }A\in\mathcal P_{\Lambda_1,\Lambda_2}.
\end{align}
Thus the kernel dimension remains constant as long as the underlying operator $P$ is fixed and only the weight $A$ varies.

It follows that $\widetilde P(A)$ retains exactly the properties used in our proofs: it is self-adjoint on the fixed Hilbert space $L^2(M,E)$, has compact resolvent, satisfies the corresponding elliptic graph-norm equivalence, and admits the same kernel control as in the Dirac case. Consequently, the proofs of Theorems~\ref{thm: C0_CONT} and~\ref{thm:C1_CONT} extend, with no essential change, from $\D$ to an arbitrary fixed first-order formally self-adjoint elliptic operator $P$.

We refer to~\cite{Bar1999zero,BarBandara2022boundary} for further examples of general first-order elliptic operators. These typical examples covered by this extension include twisted Dirac operators, the de~Rham--Hodge operator and signature-type operators.

We emphasize, however, that the present paper is deliberately formulated for the Dirac operator. This is not because the method fails in greater generality, but because the Dirac operator comes with a rich geometric and analytic background and with concrete motivation of independent interest, as explained in the Introduction. By contrast, although the abstract extension above is mathematically straightforward, we do not at present see equally compelling background motivation for developing the full general theory within the body of this paper. For this reason, we have chosen to keep the main exposition in the geometrically most natural and best-motivated Dirac setting.

\

\textbf{Conflict of Interest.} The authors have no conflicts to disclose.

\

\textbf{Acknowledgement.} Z.Q thanks Prof. Tongzhu Li for constant support. R. W. thanks Prof. Zuoqin Wang for helpful conversations on this topic.

\

\textbf{Data Availability Statement.} Data sharing not applicable to this article as no datasets were generated or
analyzed during the current study.

\printbibliography[
heading=bibintoc,
title={References}
]
\end{document}